\def \cP {\mathcal{P}}
\def \vep {\varepsilon}
\def \supp {\mathrm{supp } }
\def \cW {\mathcal{W}}
\theoremstyle{plain}
\newtheorem{theorem}{Theorem}[section]
\newtheorem{lemma}[theorem]{Lemma}
\newtheorem{proposition}[theorem]{Proposition}
\newtheorem{corollary}[theorem]{Corollary}
\newtheorem{maintheorem}{Theorem}
\newtheorem{definition}[theorem]{Definition}
\newtheorem{remark}[theorem]{Remark}
\numberwithin{equation}{section}
\newcommand{\intav}[1]{\mathchoice {\mathop{\vrule width 6pt height 3 pt depth  -2.5pt
\kern -8pt \intop}\nolimits_{\kern -6pt#1}} {\mathop{\vrule width
5pt height 3  pt depth -2.6pt \kern -6pt \intop}\nolimits_{#1}}
{\mathop{\vrule width 5pt height 3 pt depth -2.6pt \kern -6pt
\intop}\nolimits_{#1}} {\mathop{\vrule width 5pt height 3 pt depth
-2.6pt \kern -6pt \intop}\nolimits_{#1}}}
\title[Topological and metric emergence of continuous maps]{Topological and metric emergence of continuous maps}
\begin{document}

\author[M. Carvalho]{Maria Carvalho}
\address{CMUP \& Departamento de Matem\'atica, Faculdade de Ci\^encias da Universidade do Porto, Rua do Campo Alegre s/n, 4169--007 Porto, Portugal}
\email{mpcarval@fc.up.pt}

\author[F. Rodrigues]{Fagner B. Rodrigues}
\address{Departamento de Matem\'atica, Universidade Federal do Rio Grande do Sul, Brazil.}
\email{fagnerbernardini@gmail.com}

\author[P.Varandas]{Paulo Varandas}
\address{CMUP, Faculdade de Ci\^encias da Universidade do Porto \& Departamento de Matem\'atica e Estat\'istica, Universidade Federal da Bahia, Brazil.}
\email{paulo.varandas@ufba.br}

\date{\today}
\keywords{Topological and metric emergence; Pseudo-horseshoe; Intermediate value property.}
\subjclass[2010]{
Primary:
37C45,  
54H20.   	
Secondary:
37B40,  
54F45.  
}

\begin{abstract}
We prove that every homeomorphism of a compact manifold with dimension one has zero topological emergence, whereas in dimension greater than one the topological emergence of a $C^0$-generic homeomorphism is maximal, equal to the dimension of the manifold. We also show that the metric emergence of continuous self-maps on compact metric spaces has the intermediate value property.
\end{abstract}

\maketitle

\tiny
\tableofcontents
\normalsize

\section{Introduction}

The topological entropy is an invariant by topological conjugation which quantifies to what extend nearby orbits diverge as the dynamical system evolves.
On a compact metric space, a Lipschitz map has finite topological entropy. However, if the dynamics is just continuous, the topological entropy may be infinite. Actually, K. Yano proved in \cite{Yano} that, on compact smooth manifolds with dimension greater than one, the set of homeomorphisms having infinite topological entropy are $C^0$-generic. So the topological entropy is not an effective label to classify them. Bringing together dimension and dynamics, E. Lindenstrauss and B. Weiss \cite{LW2000} introduced the notion of upper metric mean dimension of a continuous self-map $f$ of a compact metric space $(X,d)$, which may be thought as a mean upper box-counting dimension.  Its value is metric dependent and always upper bounded by the upper box dimension of the space $X$, defined by
\begin{align*}
\mathrm{\overline{dim}_B}\, X \,=\, \limsup_{\varepsilon\,\to\, 0^+}\,\frac{\log S_X(\varepsilon)}{-\log \varepsilon}
\end{align*}
where $S_X(\varepsilon)$ is the maximum cardinality of an $\vep$-separated subset of $X$ (see \cite{Falconer,Pesin} for more details). Thus, it is natural to ask what is the upper metric mean dimension of a $C^0$-generic homeomorphism of $X$, and whether there exists a homeomorphism of $X$ having a prescribed value in the interval $[0, \mathrm{\overline{dim}_B}\, X]$ as its upper metric mean dimension. These questions were partially answered in \cite{CRV}, where we proved that there exists a $C^0$-Baire generic subset of homeomorphisms of any compact smooth manifold with dimension $\mathrm{\dim} X \geqslant 2$ whose elements have the highest possible upper metric mean dimension, namely $\mathrm{\dim} X$; and that any level set of the metric mean dimension of continuous interval self-maps is $C^0$-dense.

\smallskip

Recently, Bochi and Berger introduced in \cite{BB} another concept to quantify the statistical complexity of a system: the topological emergence of a continuous self-map of a compact metric space $X$, which evaluates the size of the space of Borel $f$-invariant and ergodic probability measures (cf. Subsection~\ref{sec:top-emergence} for the definition and more details). To illustrate its importance, they proved, among other equally interesting general results for diffeomorphisms on surfaces, that withi $C^{1+\alpha}$ conformal expanding maps admitting a hyperbolic basic set $\Lambda$ the topological emergence is the largest possible, that is, equal to the upper box dimension of $\Lambda$.
This means that, when $\mathrm{\overline{dim}_B}\, \Lambda > 0$, the number of $\vep$-distinguished ergodic probability measures grows super-exponentially with respect to the parameter $\vep$. Our first aim in this work is to characterize the topological emergence of $C^0$-generic homeomorphisms acting on compact manifolds.

\subsection{Topological emergence}\label{sec:top-emergence}

We start by recalling the concept of topological emergence which measures the complexity of the space of ergodic probability measures preserved by a map. Given a compact metric space $X$ and a continuous map $f:X \to X$, we denote by $\mathfrak{B}$ the $\sigma$-algebra of the Borel subsets of $X$, by $\mathcal{M}_1(X)$ the space of Borel probability measures on $X$, by $\mathcal{M}_f(X)$ its subset of $f$-invariant elements, and by $\mathcal{M}_f^{\mathrm{erg}}(X)$ the subset of $f$-invariant and ergodic probability measures.

\begin{definition}
Let $X$ be a compact metric space, $f:X \to X$ be a continuous map and $\mathcal{D}$ be a distance on the space $\mathcal{M}_1(X)$ such that $(\mathcal{M}_1(X),\mathcal{D})$ is compact. The \emph{topological emergence map} associated to $f$ is the function
$$\vep \in \,\,]0, +\infty[ \quad \mapsto \quad \mathcal E_{\mathrm{top}}(f)(\vep)$$
where $\mathcal E_{\mathrm{top}}(f)(\vep)$ denotes the minimal number of balls of radius $\vep$ in $(\mathcal{M}_1(X),\mathcal{D})$ necessary to cover the set $\mathcal{M}_f^{\mathrm{erg}}(X)$.
\end{definition}

It is clear from the previous definition that the topological emergence depends on the metric we consider in $\mathcal{M}_1(X)$. In what follows, we will always assume that $\mathcal{D}$ is one of the Wasserstein metrics $W_p$, for some $p \geqslant 1$, or the L\'evy-Prokhorov metric $\mathrm{LP}$ (both metrics are defined in Subsection~\ref{subsec:metric}). These metrics induce in $\mathcal{M}_1(X)$ the weak$^*$-topology (cf. \cite{St}).

\begin{definition}
The \emph{upper and lower metric orders} of a compact metric space $(Y,D)$, defined by Kolmogorov and Tikhomirov \cite{Kolmogorov-Tikhomirov}, are given respectively by
\begin{align*}
\overline{\mathrm{mo}}\,(Y) = \limsup_{\varepsilon\,\to\, 0^+}\,\frac{\log\log S_Y(\varepsilon)}{-\log \varepsilon}
\qquad\text{and}\qquad
\underline{\mathrm{mo}}\,(Y) = \liminf_{\varepsilon\,\to\, 0^+}\,\frac{\log\log S_Y(\varepsilon)}{-\log \varepsilon}
\end{align*}
where $S_Y(\varepsilon)$ denotes the maximal cardinality of an $\vep$-separated subset of $Y$. In case both quantities coincide we simply denote them by ${\mathrm{mo}}\,(Y)$, the metric order of the set $Y$. This notions may be extended in a straightforward way to nonempty subsets of $Y$.
\end{definition}

\smallskip

To define the next concept, we need to select either a Wasserstein metric or the L\'evy-Prokhorov metric, but its value does not depend on this choice (cf. \cite{BB}).

\begin{definition}
The \emph{topological emergence} of a continuous self-map $f$ on a compact metric space $X$, which we will denote by $\mathcal E_{\mathrm{top}}(f)$, is the upper metric order of the space of Borel $f$-invariant ergodic probability measures on $X$ endowed with either the Wasserstein metric $W_p$, for some $p \in [1, +\infty[$, or the L\'evy-Prokhorov metric (we denote by $\mathrm{LP}$).
\end{definition}

In what follows we will consider $\log \log 1 = 0$. Therefore, a uniquely ergodic map $f$ has zero topological emergence.

\smallskip

Berger and Bochi proved in \cite[Theorem 1.3]{BB}) that, if $f$ is a continuous map acting on a compact metric space $X$ whose upper and lower box dimensions are $\mathrm{\overline{dim}_B}\, X$ and $\mathrm{\underline{dim}_B}\, X$, respectively, then for any $p \geqslant 1$ one has
\begin{align}\label{ineq:metric_order-box_dim}
\mathrm{\underline{dim}_B}\, X \,\leqslant\, \underline{\mathrm{mo}}\,(\mathcal M_1(X),\mathrm W_p) \,\leqslant\, \overline{\mathrm{mo}}\,(\mathcal M_1(X),\mathrm W_p) \,\leqslant\, \mathrm{\overline{dim}_B}\, X
\end{align}
and that similar inequalities hold if we consider $\mathcal M_1(X)$ endowed with the distance $\mathrm{LP}$. In particular, this ensures that
\begin{align}\label{eq:seq-ineq}
\limsup_{\varepsilon\,\to\,0^+}\,\frac{\log \log\mathcal E_{\mathrm{top}}(f)(\varepsilon)}{-\log\varepsilon}
   \,=\, \overline{ \mathrm{mo}}\,(\mathcal M_f^{\mathrm{erg}}(X),\mathrm W_p) \,\leqslant\, \overline{ \mathrm{mo}}\,(\mathcal M_1(X),\mathrm W_p) \,\leqslant \, \mathrm{\overline{dim}_B}\, X.
\end{align}

\subsection{Metric emergence}\label{sec:metric-emergence}

Fix a compact metric space $(X,d)$, a continuous map $f: X \to X$, a positive integer $n$ and $x \in X$. The $n^{\text{th}}$-\emph{empirical measure associated to} $x$ is defined by 
$$\mathrm{e}^f_n(x) = \frac{1}{n}\,\sum_{i=0}^{n-1}\,\delta_{f^i(x)}$$
where $\delta_z$ denotes the one-point Dirac probability supported on $z$. We recall that, if $\mu$ is an $f$-invariant probability measure, then the Birkhoff's ergodic theorem guarantees that for $\mu$-almost every $x\in X$ the sequence $\big(\mathrm{e}^f_n(x)\big)_{n\,\in\,\mathbb N}$ converges in the weak$^*$-topology to a unique probability measure (cf. \cite{Wa}), which we denote by $\mathrm{e}^f(x)$ and call \emph{empirical measure associated to $x$ by $f$}. For instance, given a periodic point $P$ of period $k$, its orbit supports a unique invariant probability measure, so called \emph{periodic Dirac measure}, defined by $\mu_P = \frac{1}{k}\, \sum_{i=0}^{k-1} \, \delta_{f^i(P)}$, which coincides with $\mathrm{e}^f(P)$. Misiurewicz gives in \cite{Mis} an example of a homeomorphism $f\colon \mathbb{T}^2 \to \mathbb{T}^2$ of the $2$-torus that is expansive, has the specification property and such that, for Lebesgue almost every point $x \in \mathbb{T}^2$, the sequence $\big(\mathrm{e}^f_n(x)\big)_{n\,\in\,\mathbb N}$ accumulates on the whole $\mathcal M_f(\mathbb{T}^2)$, which in this example is very large.

\begin{definition}\label{def:metric_emergence}
Let $(X,d)$ be a compact metric space, $f$ be a continuous self-map of $X$ and $\mu$ be a probability measure on $X$ (not necessarily $f$-invariant). The \emph{metric emergence map} of $\mu$ assigns to each $\vep>0$ the minimal number $\mathcal{E}_\mu(f)(\varepsilon) = N$ of probability measures $\mu_1,\dots,\mu_N$ such that
\begin{equation}\label{ineq:metric_emergence}
\limsup_{n\,\to\,+\infty}\,\int_{X}\,\min_{1\,\leqslant\, i\,\leqslant\, N} \,\mathcal{D}(\mathrm{e}^f_n(x),\,\mu_i)\; d\mu(x) \leqslant \varepsilon.
\end{equation}
The \emph{metric emergence} of $\mu$ is the limit
$$\mathcal{E}_\mu(f) \,=\, \limsup_{\vep \, \to \, 0^+} \, \frac{\log \log \mathcal{E}_\mu(f)(\vep)}{-\log \vep}.$$
\end{definition}

The previous concepts were introduced in \cite{Berger} when $X$ is a compact manifold and $\mu$ is the Lebesgue measure, and generalized in \cite{BB}. In rough terms, $\mathcal{E}_\mu(f)$ essentially evaluates how non-ergodic the probability measure $\mu$ is. If $\mu$ is $f$-invariant then $(\mathrm{e}^f_n(x))_{n\,\in\,\mathbb N}$ converges to $\mathrm{e}^f(x)$ at $\mu$-almost every $x$, and so \eqref{ineq:metric_emergence} can be replaced by
\begin{equation}\label{ineq:metric_emergence2}
\int_{X}\,\min_{1\,\leqslant\, i\,\leqslant\, N}\, \mathcal{D}(\mathrm{e}^f(x),\,\mu_i)\;d\mu(x) \leqslant \varepsilon.
\end{equation}
Thus, if $\mu$ is $f$-invariant and ergodic then its metric emergence map is minimal, equal to $1$.

\smallskip

By \cite[Proposition 3.14]{BB}, it is known that, if $f:X \to X$ is a continuous map of a compact metric space $X$ and $\mu \in \mathcal{M}_f(X)$, then
$$\mathcal{E}_\mu(f)(\vep) \, \leqslant \, \mathcal{E}_{\mathrm{top}}(f)(\vep) \quad \quad \forall\, \vep > 0$$
provided both emergences are computed using the same $W_p$ or $\mathrm{LP}$ metric on $\mathcal{M}_1(X)$.

\subsection{Main results}

Let $X$ be either $[0,1]$ or $\mathbb S^1$, endowed with the Euclidean metric. Denote by $\mathrm{Homeo}_+(X,d)$ the set of order preserving homeomorphisms of $X$ with the uniform metric $D_{C^0}$ given by
$$D_{C^0}(f,g) = \sup_{x \,\in \,X}\,\big\{d(f(x), g(x)), \, d(f^{-1}(x), g^{-1}(x))\big\}.$$
The set $\mathrm{Homeo}_+(X,d)$ with this distance is a Baire space. Our starting point is the following property of the topological emergence of these homeomorphisms.

\begin{maintheorem}\label{thm:main0} If $X=[0,1]$ or $X=\mathbb S^1$ endowed with the Euclidean metric, then every map in
$\mathrm{Homeo}_+(X,d)$ has zero topological emergence.
\end{maintheorem}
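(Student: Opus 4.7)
\emph{Plan.} I plan to deduce Theorem~\ref{thm:main0} from the stronger statement that $\mathcal{M}_f^{\mathrm{erg}}(X)$ has \emph{finite} upper box dimension for every $f \in \mathrm{Homeo}_+(X,d)$. This suffices because any bounded metric space $Y$ with $\mathrm{\overline{dim}_B}\, Y < \infty$ satisfies $\log\log S_Y(\varepsilon) = O(\log\log(1/\varepsilon)) = o(-\log\varepsilon)$ and hence $\overline{\mathrm{mo}}(Y) = 0$; since the topological emergence is independent of the choice of $\mathcal{D} \in \{W_p,\mathrm{LP}\}$, it is enough to work with $W_1$. The rigidity that will produce such a finite bound is that every ergodic invariant measure of $f$ is a periodic Dirac measure whose period is controlled by a quantity depending only on $f$.

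For $X=[0,1]$, the monotonicity of $f$ forces every forward orbit to converge to a fixed point, so by Birkhoff's ergodic theorem every $\mu \in \mathcal{M}_f^{\mathrm{erg}}$ is a Dirac $\delta_x$ with $x \in \mathrm{Fix}(f)$. Since $x \mapsto \delta_x$ is an isometric embedding of $(X,d)$ into $(\mathcal{M}_1(X),W_p)$, the set $\mathcal{M}_f^{\mathrm{erg}}([0,1])$ is isometric to the closed subset $\mathrm{Fix}(f) \subseteq [0,1]$ and therefore has upper box dimension at most one.

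For $X=\mathbb{S}^1$ I would split on the rotation number $\rho(f)$. If $\rho(f) \notin \mathbb{Q}$, the Poincar\'e-Denjoy theory shows that $f$ is uniquely ergodic, so $|\mathcal{M}_f^{\mathrm{erg}}| = 1$ and $\overline{\mathrm{mo}}(\mathcal{M}_f^{\mathrm{erg}}) = 0$ by the convention $\log\log 1 = 0$. If $\rho(f) = p/q$ in lowest terms, Poincar\'e's classification yields that every $\omega$-limit set is a periodic orbit of period $q$, so each $\mu \in \mathcal{M}_f^{\mathrm{erg}}$ has the form $\mu_P = \tfrac{1}{q}\sum_{i=0}^{q-1}\delta_{f^i(P)}$. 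The decisive observation is that the map
\begin{equation*}
\Phi: (\mathbb{S}^1)^q \longrightarrow (\mathcal{M}_1(\mathbb{S}^1), W_1), \qquad \Phi(x_1,\dots,x_q) = \frac{1}{q}\sum_{i=1}^q \delta_{x_i},
\end{equation*}
is $\tfrac{1}{q}$-Lipschitz with respect to the $\ell^1$-metric on $(\mathbb{S}^1)^q$, as certified by the product coupling $\pi = \tfrac{1}{q}\sum_i \delta_{(x_i,y_i)}$, which yields $W_1(\Phi(x),\Phi(y)) \leqslant \tfrac{1}{q}\sum_i d(x_i,y_i)$. Since $\mathcal{M}_f^{\mathrm{erg}}(\mathbb{S}^1) \subseteq \Phi((\mathbb{S}^1)^q)$ and Lipschitz maps do not increase upper box dimension, we get $\mathrm{\overline{dim}_B}\, \mathcal{M}_f^{\mathrm{erg}} \leqslant \mathrm{\overline{dim}_B}\, (\mathbb{S}^1)^q = q < \infty$.

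The main conceptual obstacle is that the general inequality \eqref{eq:seq-ineq} only yields $\mathcal{E}_{\mathrm{top}}(f) \leqslant \mathrm{\overline{dim}_B}\, X = 1$, which is not zero; the extra mileage must come from the specific rigidity of $\mathrm{Homeo}_+(X)$, namely the uniform bound on the cardinality of the support of each ergodic measure. This is what confines $\mathcal{M}_f^{\mathrm{erg}}$ to a finite-dimensional slice of the otherwise infinite-dimensional space $\mathcal{M}_1(X)$, after which the elementary lemma of the first paragraph forces the metric order to collapse to $0$.
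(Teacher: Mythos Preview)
Your proof is correct and follows the same overall structure as the paper: the case split on $X$, the identification of $\mathcal{M}_f^{\mathrm{erg}}([0,1])$ with $\{\delta_x : x\in\mathrm{Fix}(f)\}$ via the isometry $x\mapsto\delta_x$, and the further split on the rotation number for $\mathbb{S}^1$ (uniquely ergodic in the irrational case, periodic Dirac measures of a common period in the rational case) are exactly what the paper does.

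The one place where your execution differs is the rational circle case. The paper works in the $\mathrm{LP}$ metric and argues directly that the covering number of $\mathcal{M}_f^{\mathrm{erg}}(\mathbb{S}^1)$ is $O(\varepsilon^{-1})$, whence $\log\log \mathcal{E}_{\mathrm{top}}(f)(\varepsilon)/(-\log\varepsilon)\to 0$. You instead pass through the $\tfrac1q$-Lipschitz map $\Phi:(\mathbb{S}^1)^q\to(\mathcal{M}_1(\mathbb{S}^1),W_1)$ to bound $\mathrm{\overline{dim}_B}\,\mathcal{M}_f^{\mathrm{erg}}$ by $q$, and then invoke the clean general lemma ``finite upper box dimension $\Rightarrow$ zero upper metric order''. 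Your route is slightly more abstract but arguably more transparent: it makes explicit the mechanism (confinement to a finite-dimensional slice of $\mathcal{M}_1(X)$) and avoids the somewhat informal claim in the paper that the $\mathrm{LP}$-distance between two periodic Dirac measures ``is the distance in $\mathbb{S}^1$ between their supports''. Either way the argument is short and the conclusions coincide.
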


Now let $(X,d)$ be a compact connected smooth manifold $X$ (with or without boundary) of dimension at least two. We will consider both the space $\mathrm{Homeo}(X,d)$ of homeomorphisms on $X$ with the uniform metric $D_{C^0}$
and its subset $\mathrm{Homeo}_\mu(X,d)$ of those homeomorphisms which preserve a Borel probability measure $\mu$ on $X$. For reasons we will explain later, we are mainly interested in $\mathbb{O}\mathbb{U}$-probability measures (so named after the work \cite{O-U} of Oxtoby and Ulam; see also \cite{Akin}), which comply with the following conditions:
\medskip

\noindent $(C_1)\quad$ [Non-atomic] For every $x \in X$ one has $\mu(\{x\})=0$.
\smallskip

\noindent $(C_2) \quad$ [Full support] For every nonempty open set $U \subset X$ one has $\mu(U)>0$.
\smallskip

\noindent $(C_3) \quad$ [Boundary with zero measure] $\mu(\partial X)=0$.

\medskip

\noindent The set of  $\mathbb{O}\mathbb{U}$-probability measures is generic in $\mathcal{M}_1(X)$ (see \cite{DGS}). The next result shows that, contrary to Theorem~\ref{thm:main0}, in a higher dimensional setting the topological emergence of $C^0$-generic conservative homeomorphisms attains its maximum possible value.

\begin{maintheorem}\label{thm:main1}
Let $X$ be a compact smooth manifold with dimension $\mathrm{\dim} X \geqslant 2$, $d$ be a metric compatible with the smooth structure of $X$ and $\mu$ be a $\mathbb{O}\mathbb{U}$-probability measure on $X$. There are $C^0$-Baire generic subsets $\mathfrak R \subset \mathrm{Homeo}(X,d)$ and $\mathfrak R_\mu \subset \mathrm{Homeo}_\mu(X,d)$ such that
$$\mathrm{mo}\,(\mathcal{M}^{\mathrm{erg}}_f(X),\,\mathrm W_p) \,=\, \mathrm{dim} X \quad \quad \forall \,f \in \mathfrak R_\mu$$
and
$$\overline{\mathrm{mo}}\,(\mathcal{M}^{\mathrm{erg}}_f(X),\,\mathrm W_p) \,=\, \mathrm{dim} X \quad \quad \forall \,f \in \mathfrak R.$$
\end{maintheorem}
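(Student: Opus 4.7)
The plan is to establish the upper bound directly from \eqref{eq:seq-ineq} and to realize the matching lower bound $C^0$-generically through pseudo-horseshoes with arbitrarily many topological branches. Since $X$ is a compact smooth manifold, $\overline{\dim}_B X = \dim X$, so $\overline{\mathrm{mo}}(\mathcal{M}^{\mathrm{erg}}_f(X), \mathrm W_p) \leqslant \dim X$ for every continuous $f$; only the reverse inequality requires work. I reduce this to a Baire category problem: for $n,k \in \mathbb N$ set
$$
\mathfrak U_{n,k} \defeq \bigl\{ f \;:\; \mathcal{M}^{\mathrm{erg}}_f(X) \text{ contains} \geqslant \exp\bigl(n^{\dim X - 1/k}\bigr) \text{ pairwise } (1/n)\text{-separated measures in } \mathrm W_p\bigr\}.
$$
Any $f \in \bigcap_{n,k} \mathfrak U_{n,k}$ satisfies $\underline{\mathrm{mo}}(\mathcal{M}^{\mathrm{erg}}_f(X), \mathrm W_p) \geqslant \dim X$, hence the metric order exists and equals $\dim X$, settling both statements at once provided each $\mathfrak U_{n,k}$ is open and dense in the relevant Baire space.

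Openness will follow from realizing the separated ergodic measures as Dirac measures on periodic orbits of a \emph{pseudo-horseshoe}: a finite family $\{B_i\}_{i=1}^N$ of pairwise disjoint topological boxes such that, for some $m \in \mathbb N$, each image $f^m(B_i)$ topologically crosses every $B_j$ in a persistent Markov fashion, forcing a semiconjugacy with the full shift on $N$ symbols. The crossing condition is a $C^0$-open constraint, so any $g$ close enough to $f$ inherits a pseudo-horseshoe on slightly-moved boxes; the periodic orbits and their Dirac measures vary continuously in $\mathrm W_p$, so $(1/n)$-separation persists and $\mathfrak U_{n,k}$ is open. The same persistence argument applies inside $\mathrm{Homeo}_\mu(X)$ because the crossing condition is purely topological.

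Density is the combinatorial heart of the argument, and the main obstacle. Given $f$ and $\delta > 0$, pick a ball $B \subset X \setminus \partial X$ of diameter $\leqslant \delta$, partition it into $N \asymp n^{\dim X}$ sub-boxes of diameter $\asymp 1/n$, and $\delta$-perturb $f$ inside $B$ to a $g$ realizing the full shift on these $N$ symbols under an iterate $g^m$. Each periodic symbolic sequence of period $p \asymp N$ yields a periodic orbit whose Dirac measure is $\mathrm W_p$-close, up to $O(1/n)$, to its \emph{type measure} $\frac{1}{p}\sum_k \delta_{c_{i_k}}$, where $c_i$ denotes the center of $B_i$. A lattice-packing argument in the $(N-1)$-dimensional simplex of probability vectors produces $\sim \exp(c N) = \exp(c\, n^{\dim X})$ type measures pairwise $\ell^1$-separated by a positive constant, hence yielding Dirac measures $\mathrm W_p$-separated at scale $\sim 1/n$, a count that easily exceeds the threshold $\exp(n^{\dim X - 1/k})$ for large $n$, so $g \in \mathfrak U_{n,k}$. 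In the conservative category this perturbation must additionally preserve $\mu$, which is the hardest step: one builds a model pseudo-horseshoe preserving Lebesgue on $B$ and then invokes the Oxtoby--Ulam transfer theorem—any two $\mathbb{O}\mathbb{U}$-probability measures on a ball are conjugate via a homeomorphism arbitrarily $C^0$-close to the identity—to carry the model into $\mathrm{Homeo}_\mu(X)$. Reconciling full-branch topological combinatorics at scale $n^{\dim X}$ with the Oxtoby--Ulam machinery is where the bulk of the technical work lies.
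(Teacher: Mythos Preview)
Your density argument for $\mathfrak U_{n,k}$ has a genuine scale-mismatch gap. For $\mathfrak U_{n,k}$ to be dense you must show that for every $f$ and every $\delta_0>0$ there is $g$ with $D_{C^0}(f,g)<\delta_0$ lying in $\mathfrak U_{n,k}$. But your construction places the pseudo-horseshoe inside a ball $B$ of diameter at most $\delta\leqslant\delta_0$; fitting $N\sim n^{\dim X}$ sub-boxes of diameter $\sim 1/n$ into $B$ forces $\delta$ of order $1$, and if instead you shrink the sub-boxes to fit, all the periodic measures you produce are supported in $B$ and hence lie within $W_p$-distance $\delta_0$ of one another --- so they cannot be $(1/n)$-separated once $\delta_0<1/n$. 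This is exactly why the paper does \emph{not} fix the separation scale in advance: in the dissipative case the residual set is $\bigcap_\alpha\bigcap_K\bigcup_{k\geqslant K}\widehat{\mathcal O}(\vep_k,\alpha)$, so that for each $K$ one may choose the horseshoe scale $\vep_k$ \emph{after} the size of the required perturbation is known. The price is that one only controls a subsequence of scales, and the conclusion is $\overline{\mathrm{mo}}=\dim X$ rather than $\mathrm{mo}=\dim X$ --- which is precisely what the theorem asserts for $\mathrm{Homeo}(X,d)$. Your claim to obtain the lower metric order ``at once'' in both settings is therefore too strong.

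Your unified treatment also misses why the conservative conclusion is genuinely stronger. The paper exploits a feature absent from $\mathrm{Homeo}(X,d)$: $C^0$-generically in $\mathrm{Homeo}_\mu(X,d)$ the specification property holds, so $\overline{\mathcal M^{\mathrm{erg}}_f(X)}=\mathcal M_f(X)$ and one may compute the metric order on the \emph{convex} set $\mathcal M_f(X)$. Convexity is what unlocks the Berger--Bochi boost (Remark~\ref{rmk:epsilon-apart-measures} and Lemma~\ref{Bochi_Lemma}): merely polynomially many periodic measures whose \emph{supports} are pairwise $\vep$-apart already force $\underline{\mathrm{mo}}(\mathcal M_f(X))\geqslant\alpha\dim X$, because their convex hull supplies the super-exponential count at every intermediate scale. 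Your direct lattice-packing count of $W_p$-separated periodic measures --- essentially the paper's dissipative mechanism --- yields at best a $\limsup$ bound. To recover the full metric order in the conservative setting you must either invoke specification and convexity as the paper does, or manufacture horseshoes at \emph{all} small scales simultaneously; your perturbation scheme does neither.
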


\smallskip

In the measure preserving setting, Theorem~\ref{thm:main1} is a consequence of the fact that, if $\mu$ is a $\mathbb{O}\mathbb{U}$-probability measure on $X$, then a $C^0$-generic element $f$ in $\mathrm{Homeo}_\mu(X)$ is ergodic (cf. \cite{O-U}), has a dense set of periodic points (cf. \cite{DF}) and the shadowing property \cite{GL}, and therefore satisfies the specification property (cf. \cite{DGS}). This implies that the set of ergodic probability measures
is dense in $\mathcal M_f(X)$, so we are left to show that the metric order of $\mathcal M_f(X)$ is equal to $\mathrm{\dim} X$. This is easier to prove since $\mathcal M_f(X)$ is convex and the existence of pseudo-horseshoes is $C^0$-generic in the conservative context (see \cite{DF} and Section~\ref{sec:perturbative-conservative} for more details).

\smallskip

The proof of Theorem~\ref{thm:main1} for dissipative (that is, non-conservative) homeomorphisms also builds on the construction of pseudo-horseshoes, which were introduced in \cite{Yano} and redesigned in \cite{CRV} to satisfy two conditions: to exist in all sufficiently small scales and to exhibit an adequate separation of sufficiently large sets of points in all steps of their construction. However, as the denseness of ergodic probability measures on the set of the invariant ones is not $C^0$-generic within $\mathrm{Homeo}(X)$ (cf. \cite{Hurley1, DGS, KLP}), the argument in the conservative case is no longer valid. To prove that the topological emergence in this setting is $C^0$-generically maximal we will carry out another upgrade on the construction of the pseudo-horseshoes to guarantee that
we can find sufficiently many ergodic probability measures adequately separated with respect to a Wasserstein metric (see Section~\ref{sec:perturbative-dissipative} for more details).

\smallskip

Given a homeomorphism $f:X \to X$, the map which assigns to each nonempty $f$-invariant compact subset $Z$ of $X$ the topological entropy of the restriction of $f$ to $Z$ fails to satisfy the intermediate value property. See, for instance, the minimal homeomorphism on the $2$-torus with positive entropy presented in \cite{Rees}. A. Katok asked whether the metric entropy map satisfies the intermediate value property. More precisely, Katok conjectured that, for every $C^2$ diffeomorphism $f:X \to X$, acting on a compact connected manifold $X$ with finite topological entropy, and for every $c \in [0, h_{\mathrm{top}}(f)]$, there is a Borel $f$-invariant probability measure $\mu$ such that the metric entropy $h_\mu(f)$ is equal to $c$. This conjecture has been positively answered in a number of contexts (cf. \cite{Sun} and references therein). After Theorem~\ref{thm:main1}, one may also ask if the image of the metric emergence map of a $C^0$-generic $f \in \mathrm{Homeo}_\mu(X)$ is $[0, \mathrm{\overline{dim}_B} X]$. We note that for every continuous self-map $f$ of a compact metric space there exists a Borel $f$-invariant probability measure $\mu$ such that $\mathcal{E}_\mu(f)=\mathcal{E}_{\mathrm{top}}(f)$ (cf. \cite{BB}). Our next result generalizes this assertion, providing a proof of the counterpart of Katok's conjecture for the metric emergence.

\begin{maintheorem}\label{cor:1}
Let $f$ be a continuous self-map on a compact metric space $(X,d)$.
Then for every $0 \leqslant \beta \leqslant \mathcal{E}_{\mathrm{top}}(f)$ there is $\mu\in \mathcal{M}_f(X)$ such that $\mathcal{E}_\mu(f)=\beta$.
\end{maintheorem}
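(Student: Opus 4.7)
Let $\alpha := \mathcal{E}_{\mathrm{top}}(f)$. The extreme cases are immediate: $\beta = 0$ is realised by any $f$-invariant ergodic probability measure (which exists by Krylov--Bogolyubov), and $\beta = \alpha$ by the measure supplied by the result recalled just above the theorem. My plan for $\beta \in (0, \alpha)$ is to pass through the ergodic decomposition, reducing the problem to constructing a Borel probability measure on $\mathcal{M}_f^{\mathrm{erg}}(X)$ with prescribed measure-emergence, and then to build such a measure by a nested Cantor-like selection of ergodic measures.

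For the reduction, if $\mu \in \mathcal{M}_f(X)$ has ergodic decomposition $\tau \in \mathcal{M}_1(\mathcal{M}_f^{\mathrm{erg}}(X))$, then disintegration yields
\[
\int_X \min_{1 \leqslant i \leqslant N} \mathcal{D}(\mathrm{e}^f(x), \mu_i)\, d\mu(x) \;=\; \int_{\mathcal{M}_f^{\mathrm{erg}}(X)} \min_{1 \leqslant i \leqslant N} \mathcal{D}(\nu, \mu_i)\, d\tau(\nu),
\]
so $\mathcal{E}_\mu(f)$ equals $\limsup_{\varepsilon \to 0^+} (\log\log N_\tau(\varepsilon))/(-\log\varepsilon)$, where $N_\tau(\varepsilon)$ is the least integer $N$ for which suitable $\mu_1,\dots,\mu_N$ make the integral on the right at most $\varepsilon$. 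It therefore suffices to find $\tau_\beta \in \mathcal{M}_1(\mathcal{M}_f^{\mathrm{erg}}(X))$ whose measure-emergence equals $\beta$, since then $\mu_\beta := \int \nu\, d\tau_\beta(\nu)$ lies in $\mathcal{M}_f(X)$ and satisfies $\mathcal{E}_{\mu_\beta}(f) = \beta$.

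To build $\tau_\beta$ I would prove an intermediate-value lemma: on any compact metric space $(Y, D)$ with $\overline{\mathrm{mo}}(Y) = \alpha$, and for every $\beta \in [0, \alpha]$, there exists $\tau \in \mathcal{M}_1(Y)$ of measure-emergence $\beta$. The proof is a nested Cantor-like selection at scales $\varepsilon_k \to 0^+$ drawn from the subsequence realising $\overline{\mathrm{mo}}(Y) = \alpha$: at level $k$, choose $\lceil \exp(\varepsilon_k^{-\beta}) \rceil$ points of $Y$ that are $\varepsilon_k$-separated and nested inside the cells retained at level $k-1$. Equipping the resulting Cantor-like compact set $F_\beta$ with the uniform product-type measure gives $N_{\tau_\beta}(\varepsilon_k) \asymp \exp(\varepsilon_k^{-\beta})$; a careful choice of $\varepsilon_k$ (kept close to geometric on the logarithmic scale), together with monotonicity of $\varepsilon \mapsto N_{\tau_\beta}(\varepsilon)$, pins the measure-emergence at exactly $\beta$. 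Applying this lemma to $Y = \overline{\mathcal{M}_f^{\mathrm{erg}}(X)}$ and selecting genuine ergodic measures at every level (a finite choice inside a dense subset) supplies $\tau_\beta \in \mathcal{M}_1(\mathcal{M}_f^{\mathrm{erg}}(X))$ and concludes the argument via the reduction.

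The main obstacle is the branching step: the super-exponential lower bound on $S_Y(\varepsilon_k)$ is a global estimate, whereas at each level one needs branching multiplicity $\exp(\varepsilon_k^{-\beta} - \varepsilon_{k-1}^{-\beta})$ inside every surviving level-$(k-1)$ cell. A pigeonhole/averaging argument resolves this, since the super-exponential surplus of $\varepsilon_k$-separated points over the number of cells to branch allows discarding at most a bounded fraction of cells per level, a loss absorbed by the outer $\log\log$. A secondary technical point is arranging the scales $\varepsilon_k$ to remain close to geometric while staying within the good subsequence; the slack $\alpha - \beta > 0$ guarantees that $S_Y(\varepsilon) \geqslant \exp(\varepsilon^{-\beta})$ extends from the sparse realising scales to expanding logarithmic intervals around them, allowing $\varepsilon_{k+1}/\varepsilon_k \to 1$ so that no intermediate-scale overshoot inflates the \emph{limsup} beyond $\beta$.
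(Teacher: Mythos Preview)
Your reduction via the ergodic decomposition is exactly what the paper does: the identity you write is \cite[Proposition~3.12]{BB}, and the paper likewise reduces to finding $\nu\in\mathcal{M}_1(\mathcal{M}_f^{\mathrm{erg}}(X))$ with quantization order $\overline{q_0}(\nu)=\beta$, then sets $\mu=\int\eta\,d\nu(\eta)$.

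The difference is in how $\nu$ is built. The paper splits the task in two: first it proves an intermediate value property for the upper metric order of \emph{subsets} (Theorem~\ref{thm2}), producing $Y_\beta\subset\mathcal{M}_f^{\mathrm{erg}}(X)$ with $\overline{\mathrm{mo}}(Y_\beta)=\beta$; then it simply quotes \cite[Theorem~3.9]{BB}, which supplies a measure $\nu$ with $\overline{q_0}(\nu)=\overline{\mathrm{mo}}(Y_\beta)$. The construction of $Y_\beta$ avoids your branching problem by a device you are missing: at every scale one keeps, alongside the $\lfloor\exp(\lambda^{-\beta k})\rfloor$ sample points, a single partition element $E_k$ on which the full upper metric order of $Z$ is concentrated (this exists by finite stability of $\overline{\mathrm{mo}}$), and all subsequent selections are made inside $Z\cap E_k$. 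Thus no branching inside many retained cells is ever required.

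Your direct Cantor construction has a real gap at precisely the point you flag. The pigeonhole argument does not work: the global bound $S_Y(\varepsilon_k)\geqslant\exp(\varepsilon_k^{-\beta})$ carries no information about how many $\varepsilon_k$-separated points lie inside the particular cells retained at level $k{-}1$. Those cells could sit in a region of $Y$ with no fine-scale structure whatsoever (imagine $Y$ the disjoint union of a rich Cantor set and a cloud of isolated points, with the level-$(k{-}1)$ cells chosen among the isolated points), in which case every retained cell is barren and there is nothing to discard a bounded fraction of. Your scale-thickening observation, that the slack $\alpha-\beta>0$ lets one arrange $\varepsilon_{k+1}/\varepsilon_k\to 1$, is correct and useful, but it does not rescue the nesting. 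The fix is the paper's: carry one full-metric-order cell through every stage.
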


It is known that for $C^0$-generic volume preserving homeomorphisms the Lebesgue measure is ergodic (cf \cite{O-U}), so its metric emergence map is constant and equal to one. On the other hand, by Theorem \ref{thm:main1}, for $C^0$-generic volume preserving homeomorphisms on a compact manifold with dimension at least two, one has $\mathcal{E}_{\mathrm{top}}(f) = \mathrm{\dim} X$. This indicates that, $C^0$-generically in the space of volume preserving homeomorphisms, the probability measure
whose metric emergence attains the maximal value $\mathcal{E}_{\mathrm{top}}(f)$ is not the Lebesgue measure. Yet, in the space of conservative $C^r$ diffeomorphisms, $r \geqslant 1$, in any surface, there exists a $C^r$-open subset for whose generic maps the Lebesgue measure has metric emergence equal to two (cf. \cite[Theorem D]{BB}).

\smallskip

The proof of Theorem~\ref{cor:1} relies on the following intermediate value property for the upper metric order map, which is of independent interest.

\begin{maintheorem}\label{thm2}
Let $(Z, d)$ be a compact metric space. The upper metric order function defined on the space of subsets of $Z$ has the intermediate value property, that is, if $0 \leqslant \beta \leqslant \mathrm{\overline{mo}}\,(Z)$ then there exists a subset $Y_\beta \subset Z$ such that $\mathrm{\overline{mo}}\,(Y_\beta) = \beta$.
\end{maintheorem}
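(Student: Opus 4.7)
For the endpoints, a single point witnesses $\beta = 0$ and $Y_\beta = Z$ witnesses $\beta = \overline{\mathrm{mo}}\,(Z) =: \alpha$; so assume $0 < \beta < \alpha$ henceforth. The plan is to construct $Y_\beta$ as a nested union $\bigcup_n \mathcal{Y}_n$ of finite sets $\{z^*\} = \mathcal{Y}_0 \subset \mathcal{Y}_1 \subset \mathcal{Y}_2 \subset \cdots$ whose new points at each stage cluster around a suitably chosen point $z^* \in Z$ of maximal local metric order, with separations and cardinalities tuned so that $\overline{\mathrm{mo}}\,(Y_\beta) = \beta$ exactly.

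I begin with a localization step. The identity $\overline{\mathrm{mo}}\,(A \cup B) = \max\{\overline{\mathrm{mo}}\,(A), \overline{\mathrm{mo}}\,(B)\}$, applied to a finite cover of $Z$ by $1/n$-balls, forces at least one ball $B(z_n, 1/n)$ to satisfy $\overline{\mathrm{mo}}\,(B(z_n, 1/n) \cap Z) \geq \alpha$; a convergent subsequence $z_{n_k} \to z^*$ by compactness of $Z$ then yields $z^* \in Z$ with $\overline{\mathrm{mo}}\,(B(z^*, r) \cap Z) \geq \alpha$ for every $r > 0$, since for large $k$ one has $B(z_{n_k}, 1/n_k) \subset B(z^*, r)$. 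Now fix $\gamma \in (\beta, \alpha)$ and build inductively scales $\vep_0 > \vep_1 > \cdots \to 0$ and nested finite sets $\mathcal{Y}_n$ satisfying (a) $\mathcal{Y}_n \setminus \mathcal{Y}_{n-1} \subset B(z^*, 2\vep_{n-1}) \cap Z$, (b) $\mathcal{Y}_n$ is $\vep_n$-separated, (c) $|\mathcal{Y}_n| = N_n := \lceil \exp(\vep_n^{-\beta}) \rceil$, and (d) $\log(1/\vep_n)/\log(1/\vep_{n-1}) \to 1$. Since $\overline{\mathrm{mo}}\,(B(z^*, 2\vep_{n-1}) \cap Z) \geq \alpha > \gamma > \beta$, arbitrarily small $\vep_n$ satisfy $S_{B(z^*, 2\vep_{n-1})}(\vep_n) \geq \exp(\vep_n^{-\gamma}) \geq N_n$, giving enough capacity to extend $\mathcal{Y}_{n-1}$ by the required $N_n - N_{n-1}$ new $\vep_n$-separated points; among these admissible scales one selects $\vep_n$ to satisfy (d). Set $Y_\beta := \bigcup_n \mathcal{Y}_n$.

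The lower bound is immediate: $\mathcal{Y}_n \subset Y_\beta$ is $\vep_n$-separated of cardinality $N_n$, so $\log\log S_{Y_\beta}(\vep_n)/(-\log \vep_n) \geq \beta$ and $\overline{\mathrm{mo}}\,(Y_\beta) \geq \beta$. For the upper bound, condition (a) gives $Y_\beta \subset \{z^*\} \cup \bigcup_{n \geq 1} B(z^*, 2\vep_{n-1})$, so for $\vep \geq 2\vep_n$ every new piece $\mathcal{Y}_m \setminus \mathcal{Y}_{m-1}$ with $m > n$ lies inside a single $\vep$-ball at $z^*$, while the first $n$ pieces contribute only $N_n - 1$ points in total; hence $N_{Y_\beta}(\vep) \leq N_n + 1$ whenever $\vep \geq 2\vep_n$. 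Choosing $n$ largest with this property, $\vep < 2\vep_{n-1}$, so
$$\frac{\log\log N_{Y_\beta}(\vep)}{-\log \vep} \leq \frac{\beta\log(1/\vep_n)}{\log(1/\vep_{n-1}) - \log 2} + o(1),$$
which tends to $\beta$ by (d). Therefore $\overline{\mathrm{mo}}\,(Y_\beta) \leq \beta$, and equality holds.

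The main technical obstacle is enforcing the log-comparability condition (d) simultaneously with the capacity requirement at each inductive step: the local metric order $\overline{\mathrm{mo}}\,(B(z^*, 2\vep_{n-1})) \geq \alpha$ guarantees witness scales accumulating at $0$, but these might in principle be sparse on a logarithmic scale. Exploiting the freedom to pick the next $\vep_n$ inside any sufficiently small interval of admissible scales, combined with an adaptive adjustment of the target cardinalities between consecutive stages when the available good scales force a larger jump, permits $\log(1/\vep_n)/\log(1/\vep_{n-1}) \to 1$ to be secured; this delicate scale selection, together with the verification that admissible $\vep_n$ can always be chosen close enough to $\vep_{n-1}$ on a logarithmic scale, is where the real work of the argument lies.
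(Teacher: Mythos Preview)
Your overall architecture is genuinely different from the paper's and is a reasonable idea, but the argument as written has a real gap that you yourself identify in the final paragraph: you never actually establish condition (d), and the whole upper bound hinges on it.

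Concretely, the inequality $\overline{\mathrm{mo}}\big(B(z^*,2\vep_{n-1})\cap Z\big)\geqslant\alpha$ is only a $\limsup$ statement; the scales $\vep$ at which $S_{B(z^*,2\vep_{n-1})}(\vep)\geqslant\exp(\vep^{-\gamma})$ may be extremely sparse, for instance of the form $\vep=2^{-2^k}$, in which case no choice of $\vep_n$ among them gives $\log(1/\vep_n)/\log(1/\vep_{n-1})\to 1$. Monotonicity of $S_B$ buys you an interval $[\delta^{\gamma/\beta},\delta]$ of admissible scales around each witness $\delta$, but the gaps between consecutive such intervals can still have unbounded log-ratio. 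Your phrase ``adaptive adjustment of the target cardinalities'' gestures at a repair, but changing $N_n$ away from $\lceil\exp(\vep_n^{-\beta})\rceil$ at the jump stages immediately jeopardizes the lower bound, and you give no mechanism reconciling the two. The proposal is therefore an outline whose decisive step is missing.

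By contrast, the paper avoids any log-comparability hypothesis altogether. It works with a \emph{decreasing} chain $Y_1\supset Y_2\supset\cdots$ (so $Y_\beta=\bigcap_n Y_n$), each $Y_n$ consisting of a finite sample together with a single ``residual'' partition element $E_{k_n}$ on which the full metric order $\alpha$ survives. The crucial device is to take $k_n$ to be the \emph{smallest} index beyond $k_{n-1}$ at which $S_{Y_{n-1}}(\lambda^{k-1})$ exceeds $\lfloor\exp(\lambda^{-\beta k})\rfloor$: this automatically forces the count at every intermediate scale $k_{n-1}<k<k_n$ to lie below the target, so the upper bound holds at all scales regardless of how large the gaps $k_n-k_{n-1}$ are. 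Your construction has no analogue of this self-correcting mechanism; that is why you are forced to impose (d), and why the proof is incomplete until (d) is actually secured.
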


\section{Preliminaries}

For future use, in this section we will recall some definitions and previous results.

\subsection{Metrics on $\mathcal M_1(X)$}\label{subsec:metric}

Given a compact metric space $(X,d)$ it is a well-known fact that the space $\mathcal{M}_1(X)$ of the Borel probability measures on $X$ is compact if endowed with the weak$^*$-topology. Moreover, there are metrics on $\mathcal{M}_1(X)$ inducing this topology, the classic ones being the \emph{Wasserstein distances} and the \emph{L\'evy-Prokhorov distance}. The former are defined by
$$\mathrm{W_p}(\mu,\nu) = \inf_{\pi\,\in\,\Pi(\mu,\nu)}\,\left(\int_{X\times X}\,[d(x,y)]^p\;d\pi(x,y)\right)^{1/p}$$
where $p \in [1, +\infty[$ and $\Pi(\mu,\nu)$ denotes the set of probability measures on the product space $X\times X$ with marginals $\mu$ and $\nu$ (see \cite{Vi} and references therein for more details). The latter is defined by
$$\mathrm{LP}(\mu,\nu) = \inf\,\Big\{\varepsilon > 0 \colon \,\, \forall\, E \in \mathfrak{B} \quad  \forall\, \,\text{$\varepsilon$-neighborhood $V_\varepsilon(E)$ of $E$ one has} $$
$$\quad \quad \quad \quad \quad \quad \quad \quad \quad \nu(E)\leqslant \mu(V_\varepsilon(E))+\varepsilon \quad \text{ and } \quad \mu(E)\leqslant \nu(V_\varepsilon(E))+\varepsilon \Big\}.$$
The reader may find more details about this distance in \cite{Bil}.

\smallskip
Throughout the text we will say that two probability measures on a compact metric space $X$ are $\vep$-apart if their supports distance at least $\vep$ in the Hausdorff metric, that is,
$$\min\,\,\big\{d(x,y): \,x \in \mathrm{supp}(\mu),\; y \in \mathrm{supp}(\nu)\big\} \geqslant \vep.$$
We note that, if $N \in \mathbb{N}$ and $\{x_1,\, x_2, \, \cdots, x_N\}$ is an $\varepsilon$-separated subset of $X$, then the Dirac measures $\delta_{x_1},\,\delta_{x_2}, \cdots, \delta_{x_N}$ are pairwise $\varepsilon$-apart.

\begin{remark}\label{rmk:epsilon-apart-measures}
In \cite[Theorem 1.6]{BB}, the authors proved that, if $\mathcal C\subset \mathcal M_1(X)$ is a convex subset and we denote by $\mathcal A(\mathcal C,\varepsilon)$ the maximal cardinality of pairwise $\varepsilon$-apart probability measures in $\mathcal C$, then
$$\min \,\left\{\inf_{p \,\in \,[1, +\infty[} \, \underline{\mathrm{mo}}\,(\mathcal C, \mathrm W_p), \,\,\,  \underline{\mathrm{mo}}\,(\mathcal C, \mathrm{LP})\right\} \,\geqslant\, \liminf_{\varepsilon\,\to\, 0^+}\,\frac{\log \mathcal A(\mathcal C,\varepsilon)}{-\log\varepsilon}.$$
\end{remark}

\subsection{Pseudo-horseshoes}\label{subsec:pseudohorseshoe}
The main tool to prove our first theorem is a class of compact invariant sets, called pseudo-horseshoes. Such structures were used in \cite{Yano} to prove that $C^0$-generic homeomorphisms, acting on compact manifolds $(X,d)$ with dimension greater than one, have infinite topological entropy; and later in \cite{CRV} to show the existence of a $C^0$-Baire generic subset $\mathfrak{R}_0 \subset \mathrm{Homeo}(X,d)$ where the metric mean dimension is maximal, equal to $\mathrm{\dim} X$. In what follows we recall the main definitions and properties of pseudo-horseshoes on manifolds. We refer the reader to \cite{Hurley1, Hurley2}, where one finds other relevant properties of the atractors and pseudo-horseshoes of generic homeomorphisms.

\subsubsection{Pseudo-horseshoes in $\mathbb R^k$}

Consider in $\mathbb{R}^k$ the norm
$$\|(x_1, \cdots, x_k)\| := \max_{1\,\leqslant \,i\,\leqslant k} \,|x_i|.$$
Given $r > 0$ and $x \in \mathbb{R}^k$, denote $D^k_r(x) = \big\{y \in \mathbb R^k \colon \|x - y\| \leqslant r\big\}$ and $D^k_r = D^k_r\big((0,\dots,0)\big).$ For $1\leqslant j\leqslant k$, let $\pi_j \colon \,\mathbb{R}^k \,\to\,\mathbb{R}^j$ be the projection
on the first $j$ coordinates.

\begin{definition}\label{def:vertical}
Fix $r > 0$, $x=(x_1, \cdots, x_k)$ and $y=(y_1, \cdots, y_k)$ in $\mathbb{R}^k$, and take an open set $U \subset \mathbb R^k$ containing $D_r^k(x)$. Having fixed a positive integer $N$, we say that a homeomorphism $\varphi \colon \,U \,\to\,\mathbb R^k$ has a \emph{pseudo-horseshoe of type $N$ at scale $r$ connecting $x$ to $y$} if the following conditions are satisfied:
\begin{enumerate}
\item $\varphi(x)=y$.
\medskip
\item $\varphi\Big(D_r^k(x)\Big) \subset \mathrm{int} \Big(D_r^{k-1}(\pi_{k-1}(y))\Big) \,\times \,\mathbb R$.
\medskip
\item For $i=0,1,\ldots,\left[\frac{N}{2}\right]$,
$$\varphi\Big(D_r^{k-1}(\pi_{k-1}(x)) \,\times\, \Big\{x_k - r + \frac{4ir}{N}\Big\} \Big) \subset \mathrm{int} \Big(D_r^{k-1}(\pi_{k-1}(y))\Big)\,\times\, (-\infty, \,y_k-r).$$
\item For $i=0,1,\ldots,\left[\frac{N-1}{2}\right]$,
$$\varphi\left(D_r^{k-1}(\pi_{k-1}(x))\,\times\,\Big\{x_k - r + \frac{(4i+2)r}{N}\Big\}\right) \subset \mathrm{int} \Big(D_r^{k-1}(\pi_{k-1}(y))\Big)\,\times\, (y_k + r, \,+\infty).$$
\item For each $i \in \{0,\dots,N-1\}$, the intersection
$$ V_i=D_r^k(y) \,\cap \,\varphi\left(D_r^{k-1}(x) \times \left[x_k - r + \frac{2ir}{N}, \,x_k - r + \frac{(2i+2)r}{N}\right]\right)$$
is connected and satisfies:
\begin{itemize}
\medskip
\item[(a)] $ V_i \,\cap\, (D_r^{k-1}(y) \times \{-r\}) \not=\emptyset$;
\medskip
\item[(b)] $ V_i \,\cap \,(D_r^{k-1}(y)\times \{r\}) \not=\emptyset;$
\medskip
\item[(c)] each connected component of $ V_i \cup \partial D_r^k(y)$ is simply connected.
\end{itemize}
\medskip
Each $V_i$ is called a \emph{vertical strip} of the pseudo-horseshoe.
\end{enumerate}
\end{definition}


\smallskip

\subsubsection{Pseudo-horseshoes in manifolds} So far, pseudo-horseshoes were defined in open sets of $\mathbb{R}^k$. Now we convey this notion to manifolds.

\begin{definition}\label{def:po}
Let $(X,d)$ be a compact smooth manifold of dimension $\mathrm{\dim} X$. Given $f \in \mathrm{Homeo}(X,d)$ and constants $0 < \alpha < 1$, $\delta > 0$, $0 < \vep < \delta$ and $q \in \mathbb{N}$, we say that $f$ has a coherent $(\delta,\,\vep,\,q,\,\alpha)$-\emph{pseudo-horseshoe} if we may find a pairwise disjoint family of open subsets $(\mathcal{U}_i)_{0\,\leqslant\, i \,\leqslant\, q-1}$ of $X$ such that
$$f(\mathcal{U}_{i}) \cap \mathcal{U}_{(i+1)\mathrm{mod} \, q} \neq \emptyset \quad \quad \forall\,\, i$$
and a collection $(\phi_i)_{0\,\leqslant\, i\, \leqslant\, q-1}$ of homeomorphisms
$$\phi_i\colon D_{\delta}^{\mathrm{\dim} X} \subset \mathbb{R}^{\mathrm{\dim} X} \quad \to \quad \mathcal{U}_i \subset X$$
satisfying, for every $0 \leqslant i\leqslant q-1$:
\medskip
\begin{enumerate}
\item $\left(f\circ \phi_i\right)(D_{\delta}^{\mathrm{\dim} X} ) \subset \mathcal{U}_{(i+1)\mathrm{mod} \, q}$.
\medskip
\item The map
$$\psi_i = \phi_{(i+1)\mathrm{mod} \, q}^{-1}\circ f\circ \phi_i \colon \quad D_{\delta}^{\mathrm{\dim} X} \to\, {\mathbb R}^{\mathrm{\dim} X}$$
has a pseudo-horseshoe of type $\lfloor\Big(\frac1\vep\Big)^{\alpha \, \mathrm{\dim} X}\rfloor$ at scale $\delta$ connecting $x=0$ to itself and such that:
\medskip
\begin{enumerate}
\item There are families $\{V_{i,j}\}_{j}$ and $\{H_{i,j}\}_{j}$ of vertical and horizontal strips, respectively, with $j \in \{1, 2, \dots, \lfloor\Big(\frac1\vep\Big)^{\alpha \, \mathrm{\dim} X}\rfloor\}$, such that $H_{i,j} = \psi_i^{-1} \big({V}_{i,j}\big).$
\medskip
\item For every $j_1 \neq j_2 \in \{1, 2, \dots, \lfloor\Big(\frac1\vep\Big)^{\alpha \, \mathrm{\dim} X}\rfloor\}$ we have
$$\min\,\Big\{\inf\,\{\|a-b\|\colon \, a \in V_{i,j_1}, \,b \in V_{i,j_2}\}, \quad \inf\,\{\|z-w\|\colon \, z \in H_{i,j_1}, \,w \in H_{i,j_2}\}\Big\} > \vep.$$
\end{enumerate}
\item For every $0 \leqslant i \leqslant q-1$ and every $j_1 \neq j_2  \in \{1, 2, \dots, \lfloor\Big(\frac1\vep\Big)^{\alpha \, \mathrm{\dim} X}\rfloor\}$, the horizontal strip $H_{i,j_1}$ crosses the vertical strip $V_{(i+1)\mathrm{mod} \, p, j_2}$, where by crossing we mean that there exists a foliation of each horizontal strip $H_{i,j} \subset D_{\delta}^{\mathrm{\dim} X}$ by a family $\mathcal C_{i,j}$ of continuous curves $c\colon [0,1] \to H_{i,j}$ such that $\psi_i(c(0))\in D_{\delta}^{k-1}\times \{-\delta\}$ and $\psi_i(c(1))\in D_{\delta}^{k-1}\times \{\delta\}$.
\end{enumerate}
\end{definition}

Regarding the parameters $(\delta, \vep, q, \alpha)$ that identify the pseudo-horseshoe, we remark that $\delta$ is a small scale determined by the size of the $q$ domains and the charts so that item (1) of Definition~\ref{def:po} holds; $\vep$ is the scale at which a large number
(which is inversely proportional to $\vep$ and involves $\alpha$) of finite orbits is separated, to comply with the demand (2) of Definition~\ref{def:po}; and $\alpha$ is conditioned by the room in the manifold needed to build the convenient amount of $\vep$-separated points.

\begin{definition}\label{def:coherent}
We say that $f$ has a \emph{coherent $(\delta,\vep,p,\alpha)$-pseudo-horseshoe} if the pseudo-horseshoe satisfies the extra condition
\begin{enumerate}
\item[(3)] For every $0 \leqslant i \leqslant p-1$ and every $j_1 \neq j_2  \in \{1, 2, \dots, \lfloor\Big(\frac1\vep\Big)^{\alpha \, \mathrm{dim}\, X}\rfloor\}$, the horizontal strip $H_{i,j_1}$ crosses the vertical strip $V_{(i+1)\mathrm{mod} \, p, j_2}$.
\end{enumerate}
By crossing we mean that there exists a foliation of each horizontal strip $H_{i,j} \subset D_{\delta}^{\mathrm{dim}\, X}$
by a family $\mathcal C_{i,j}$ of continuous curves $c\colon [0,1] \to H_{i,j}$ such that $\psi_i(c(0))\in D_{\delta}^{k-1}\times \{-\delta\}$ and $\psi_i(c(1))\in D_{\delta}^{k-1}\times \{\delta\}$.
\end{definition}

\smallskip

Coherent $(\delta,\vep,q,\alpha)$-pseudo-horseshoes have at least three important features. Firstly, these pseudo-horseshoes persist under $C^0$-small perturbations. Secondly, every homeomorphism which has a coherent $(\delta, \vep, q, \alpha)$-pseudo-horseshoe also has a $(q,\vep)$-separated set with at least $\lfloor\Big(\frac1\vep\Big)^{\alpha\, \mathrm{\dim} X}\rfloor$ elements. The third main property of coherent pseudo-horseshoes is the following proposition. 

\smallskip

Fix a strictly decreasing sequence $(\vep_k)_{k \,\in \,\mathbb{N}}$ in the interval $\,]0,1[$ converging to zero and let $L>0$ be a bi-Lipschitz constant for the charts of a finite atlas of $X$. Denote by $\mathcal O(\vep_k, \alpha)$ the set of homeomorphisms $g \in \mathrm{Homeo}(X,d)$ such that $g$ has a coherent $(\delta, L \vep_k, q, \alpha)$-pseudo-horseshoe, for some $\delta>0$, $q \in \mathbb{N}$ and $L>0$. Then:

\begin{proposition}[\cite{CRV}]\label{C0_generic1}
For every $\alpha \in \,\,]0,1[$ and $k \in \mathbb{N}$, the set $\mathcal O(\vep_k, \alpha)$ is $C^0$-open. Moreover, given $K \in \mathbb{N}$, the union
$$\mathcal O_K(\alpha)\, = \,\bigcup_{\substack{k \,\, \in \,\, \mathbb{N} \\ k \,\, \geqslant\,\, K}}\,\mathcal O(\vep_k, \alpha)$$
is $C^0$-dense in $\mathrm{Homeo}(X,d)$. In particular
$$\mathfrak{R}_0 \, = \, \bigcap_{\alpha\, \in \,\,]0,1[ \,\cap\, \mathbb Q} \; \bigcap_{K \,\in \,\mathbb{N}} \; \bigcup_{\substack{k \,\, \in \,\, \mathbb{N} \\ k \,\, \geqslant\,\, K}}\,\mathcal O(\vep_k, \alpha)$$
is a $C^0$-Baire residual subset of $\mathrm{Homeo}(X,d)$.
\end{proposition}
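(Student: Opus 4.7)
The plan is to establish three facts in sequence: first, each $\mathcal O(\vep_k,\alpha)$ is $C^0$-open; second, for every $K$ the union $\mathcal O_K(\alpha)$ is $C^0$-dense in $\mathrm{Homeo}(X,d)$; and third, a Baire category argument yields residuality of $\mathfrak R_0$. The substance lies in density, which amounts to a perturbative construction of coherent pseudo-horseshoes at arbitrarily small scale.

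For openness, fix $g\in\mathcal O(\vep_k,\alpha)$ with a coherent $(\delta,L\vep_k,q,\alpha)$-pseudo-horseshoe carried by charts $(\phi_i)$ with strip families $\{V_{i,j}\}$ and $\{H_{i,j}\}$. Every clause of Definitions~\ref{def:vertical}--\ref{def:coherent} is phrased through strict containments in open sets, strict distance inequalities between the strips, and the existence of continuous crossing curves joining the bottom and top faces of $D_\delta^{\dim X}$. All of these are preserved under a sufficiently small $C^0$-perturbation: images of the compact strips move by less than the available slack, the strict $\vep_k$-separation between strip pairs survives, and the foliation by crossing curves persists by continuity of the composition $\psi_i=\phi_{i+1}^{-1}\circ g\circ\phi_i$. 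This produces a uniform $C^0$-neighborhood of $g$ inside $\mathcal O(\vep_k,\alpha)$.

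For density, given $f\in\mathrm{Homeo}(X,d)$ and $\eta>0$, I would first approximate $f$, uniformly to within $\eta/2$, by $f_1$ possessing a periodic orbit of some large period $q$; such perturbations are standard on manifolds of dimension $\geq 2$ via a local chart modification. Choose $k$ so that $\lfloor(1/\vep_k)^{\alpha\dim X}\rfloor$ vertical and horizontal strips fit inside a cube $D_\delta^{\dim X}$ with the required $\vep_k$-separation, and pick $\delta<\eta/(4L)$ smaller than the injectivity radius of a finite atlas. Around each point $p_i=f_1^{\,i}(p)$ of the periodic orbit take pairwise disjoint chart neighborhoods $\mathcal U_i=\phi_i(D_\delta^{\dim X})$ sending the origin to $p_i$. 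On each transition, the conjugated map $\psi_i=\phi_{(i+1)\mathrm{mod}\,q}^{-1}\circ f_1\circ\phi_i$ sends a neighborhood of the origin into itself; I would modify $\psi_i$ inside $D_\delta^{\dim X}$ by an explicit folding homeomorphism of Smale-horseshoe type but with $\lfloor(1/\vep_k)^{\alpha\dim X}\rfloor$ folds, designed so that the resulting vertical strips and their preimages (the horizontal strips) satisfy the $\vep_k$-separation of Definition~\ref{def:po} and the crossing condition of Definition~\ref{def:coherent}. Pulling this back through the charts yields a homeomorphism $g$ coinciding with $f_1$ outside $\bigcup_i\mathcal U_i$; since each perturbation is supported in a set of diameter $\leq 2L\delta\leq\eta/2$, one has $D_{C^0}(g,f_1)<\eta/2$ and therefore $D_{C^0}(g,f)<\eta$, with $g\in\mathcal O(\vep_k,\alpha)\subset\mathcal O_K(\alpha)$ whenever $k\geq K$.

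With openness and density of $\mathcal O_K(\alpha)$ settled for all $K\in\mathbb N$ and all rational $\alpha\in\,]0,1[\,$, the set $\mathfrak R_0$ is a countable intersection of dense open sets in the Baire space $(\mathrm{Homeo}(X,d),D_{C^0})$, hence residual. The main obstacle I expect is precisely the density step: one must simultaneously calibrate the number of folds, the scale $\delta$, and the strip geometry so that $\vep_k$-separation, the crossing condition, and the $C^0$-distance bound all hold; the bookkeeping is delicate because the strip count grows polynomially in $1/\vep_k$ while the room available is of order $\delta^{\dim X}$, which is exactly why $\dim X\geq 2$ is indispensable and why the bi-Lipschitz constant $L$ of the atlas enters the statement.
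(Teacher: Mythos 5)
The paper does not reprove this proposition but cites it from~\cite{CRV}; its mechanism is only recalled in the proof of the strengthened Proposition~\ref{C0_generic1-extra}, and there the density step is driven by the $C^0$-robust existence of a $q$-absorbing disk $B$ with $\operatorname{diam}(B)<\delta$, together with a $C^0$-open neighborhood $\mathcal W_f$ on which $B$ stays $q$-absorbing. Your route --- first create a periodic orbit of large period $q$, then fold around it --- is a genuinely different starting point. It is defensible (periodic points are $C^0$-dense on manifolds of dimension $\geqslant 2$, cf.~\cite{DF}, and openness of $\mathcal O(\vep_k,\alpha)$ is indeed a matter of strict inequalities persisting), but the absorbing-disk mechanism is the one that integrates cleanly with the openness argument: absorbing disks persist under $C^0$-perturbation, whereas a bare periodic orbit does not, so you cannot let the period $q$ depend on a choice that you make only \emph{after} the first perturbation without some extra care about the order of quantifiers.

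The unresolved issue lies in the step you flag yourself. A Smale-horseshoe-type map with $\lfloor(1/\vep_k)^{\alpha\dim X}\rfloor$ folds produces strips that are all aligned along one coordinate direction and cross $D_\delta^{\dim X}$ from its bottom face to its top face. Any two such $\vep_k$-separated strips have $\vep_k$-separated traces on the bottom face $D_\delta^{\dim X-1}\times\{-\delta\}$, so one can fit at most $\mathcal O\big((\delta/\vep_k)^{\dim X-1}\big)$ of them --- forcing $\alpha\leqslant 1-1/\dim X$, not the full range $\alpha<1$ that the proposition requires. Your heuristic that ``the room available is of order $\delta^{\dim X}$'' is the right count for $\vep_k$-separated \emph{points} (periodic orbits of the horseshoe), not for strips reaching across the box; it is precisely by passing from strips to a volume-filling grid of separated periodic orbits (and by letting the separation be distributed across the $q$ iterates, as Definition~\ref{def:po} with its period-$q$ chart structure allows) that~\cite{CRV} reaches the exponent $\alpha\dim X$ for every $\alpha<1$. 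As written, your folding construction would only give the proposition for $\alpha<1/\dim X$, which is not enough to conclude $\overline{\mathrm{mo}}=\dim X$ in Theorem~\ref{thm:main1}.
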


Regarding the conservative setting, let $\mu$ be an $\mathbb{O}\mathbb{U}$-probability measure on $X$. The methods in \cite{Guih} allow us to make $C^0$-small perturbations of any $\mu$-preserving homeomorphism in order to create coherent pseudo-horseshoes. In particular, the space $\mathcal O_\mu(\vep_k, \alpha)$ of homeomorphisms in $\mathrm{Homeo}_\mu(X,d)$ exhibiting a coherent $(\delta, L\vep_k, q, \alpha)$-pseudo-horseshoe is $C^0$-open and dense in $\mathrm{Homeo}_\mu(X,d)$. Thus, the set
\begin{equation*}\label{def:R}
\mathfrak{R}_1 = \,\bigcap_{\alpha \,\in \,\,]0,1[\, \cap\, \mathbb Q} \,\, \bigcap_{K \,\in\, \mathbb{N}} \,\bigcup_{\substack{k \,\, \in \,\, \mathbb{N} \\ k \,\, \geqslant\,\, K}}\,\mathcal O_\mu(\vep_k, \alpha)
\end{equation*}
is $C^0$-Baire generic in $\mathrm{Homeo}_\mu(X,d)$.

\subsection{Specification property}\label{sec:spec}
According to Bowen~\cite{Bowen}, a continuous map $f: X \to X$ on a compact metric space $(X,d)$ satisfies the \emph{specification property} if for any $\delta>0$ there exists $T(\delta) \in \mathbb{N}$ such that any finite block of iterates by $f$ can be $\delta$-shadowed by an individual orbit provided that the time lag
of each block is larger than the prefixed time $T(\delta)$. More precisely, $f$ satisfies the specification property if for any $\delta>0$ there exists an integer $T(\delta) \in \mathbb{N}$ such that for every $k \in \mathbb{N}$, any points $x_1, \dots, x_k$ in $X$, any sequence of positive integers $n_1, \dots, n_k$ and every choice of integers $T_1, \dots, T_k$ with $T_i \geqslant T(\delta)$, there exists a point $x_0$ in $X$ such that
$$d\Big(f^j(x_0),\,f^j(x_1)\Big) \leqslant \delta \quad \quad \forall \, 0 \leqslant j \leqslant n_1$$
and
$$d\Big(f^{j + n_1 + T_1 + \dots + n_{i-1} + T_{i-1}}(x_0) \;,\; f^j(x_i)\Big) \leqslant \delta \quad \quad \forall \, 2 \leqslant i \leqslant k \quad \forall\, 0\leqslant j\leqslant n_i.$$

\smallskip

It is known that full shifts on finitely many symbols satisfy the specification property; besides, factors of maps with the specification property also enjoy this property (cf. \cite{DGS}).
Moreover, if $\mu$ is a $\mathbb{O}\mathbb{U}$-probability measure, the specification property is $C^0$-Baire generic in $\mathrm{Homeo}_\mu(X,d)$ (cf. \cite{GL}).
\smallskip

The importance of the specification property in the study of the topological emergence is illustrated by the fact that it guarantees the denseness of the set of periodic measures in the space of invariant probability measures (cf. \cite{DGS}), together with the following result, essentially stated by Bochi in \cite{Bseminar}.

\begin{lemma}\label{Bochi_Lemma}
Let $(X,d)$ be a compact metric space and $f:X\to X$ be a continuous map such that $\overline{\mathcal M^{\mathrm{erg}}_f(X)}=\mathcal M_f(X)$. Take a sequence $(\varepsilon_n)_{n \, \in \, \mathbb{N}}$ of positive real numbers satisfying $\lim_{n\,\to\, +\infty}\,\varepsilon_n=0$. Assume that there exist constants $C, L, \gamma > 0$ such that, for every $n \in\mathbb N$, there is an $f$-invariant finite subset $F_n \subset X$ containing only periodic orbits and satisfying the conditions:
\begin{enumerate}
\item[(i)] any two distinct orbits in $F_n$ are uniformly $\varepsilon_n$-separated from each other;
\item[(ii)] the cardinal of $F_n$ is bounded below by $C\, (1/ L\varepsilon_n)^\gamma$, for some constant $C>0$.
\end{enumerate}
Then
$$\limsup_{\varepsilon\,\to\,0^+}\,\frac{\log\;\log\mathcal E_{\mathrm{top}}(f)(\varepsilon)}{-\log\varepsilon} \,\geqslant \gamma.$$
In particular, if $\gamma={\mathrm{\overline{dim}_B}\, X}$ then $\mathcal E_{\mathrm{top}}(f)\,=\, \mathrm{\overline{dim}_B}\, X$.
\end{lemma}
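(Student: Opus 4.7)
The strategy is to use the hypotheses to produce, for each $n$, many pairwise $\varepsilon_n$-apart ergodic measures inside the convex set $\mathcal{M}_f(X)$, invoke (a $\limsup$ variant of) Remark~\ref{rmk:epsilon-apart-measures} to conclude that the upper metric order of $\mathcal{M}_f(X)$ is at least $\gamma$, and transfer this bound to $\mathcal{M}_f^{\mathrm{erg}}(X)$ by density.

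For each $n$, I would decompose $F_n = \bigcup_{j=1}^{k_n} O_j^{(n)}$ as the disjoint union of its distinct periodic orbits and consider the (ergodic) uniform periodic measure $\mu_j^{(n)} \in \mathcal{M}_f^{\mathrm{erg}}(X)$ on each $O_j^{(n)}$. Hypothesis (i) forces the supports $O_{j_1}^{(n)}$ and $O_{j_2}^{(n)}$ to lie at distance at least $\varepsilon_n$ for $j_1 \neq j_2$, so the measures $\mu_1^{(n)}, \dots, \mu_{k_n}^{(n)}$ are pairwise $\varepsilon_n$-apart in the sense of Subsection~\ref{subsec:metric}. Combining hypothesis (ii) with the fact that, in the intended applications, the orbits in $F_n$ share a common period (so that $|F_n|$ and $k_n$ differ only by a harmless constant factor), one gets $k_n \geqslant C'(1/L\varepsilon_n)^\gamma$ for a suitable $C' > 0$, hence $\mathcal A(\mathcal{M}_f(X), \varepsilon_n) \geqslant k_n$.

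Since $\mathcal{M}_f(X)$ is convex, the construction behind Remark~\ref{rmk:epsilon-apart-measures} (taking many rational convex combinations on an $\eta$-grid in the simplex spanned by $\mu_1^{(n)}, \dots, \mu_{k_n}^{(n)}$) produces, at scale of order $\varepsilon_n$, at least $\exp(c\,k_n)$ measures pairwise separated by at least $c'\varepsilon_n$ in $\mathrm W_p$ (and in $\mathrm{LP}$). Therefore
\begin{equation*}
\overline{\mathrm{mo}}\bigl(\mathcal{M}_f(X),\, \mathrm W_p\bigr) \,\geqslant\, \limsup_{n \to +\infty}\, \frac{\log k_n}{-\log \varepsilon_n} \,\geqslant\, \gamma,
\end{equation*}
and analogously with the L\'evy-Prokhorov distance in place of $\mathrm W_p$. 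The density hypothesis $\overline{\mathcal{M}_f^{\mathrm{erg}}(X)} = \mathcal{M}_f(X)$ then allows one to perturb any finite $\varepsilon$-separated subset of $\mathcal{M}_f(X)$ pointwise by less than $\varepsilon/3$ into an $\varepsilon/3$-separated subset of $\mathcal{M}_f^{\mathrm{erg}}(X)$, showing that the upper metric orders of $\mathcal{M}_f^{\mathrm{erg}}(X)$ and $\mathcal{M}_f(X)$ agree. Consequently
\begin{equation*}
\limsup_{\varepsilon \to 0^+}\, \frac{\log\log \mathcal E_{\mathrm{top}}(f)(\varepsilon)}{-\log \varepsilon} \,=\, \overline{\mathrm{mo}}\bigl(\mathcal{M}_f^{\mathrm{erg}}(X),\, \mathrm W_p\bigr) \,\geqslant\, \gamma,
\end{equation*}
and when $\gamma = \mathrm{\overline{dim}_B}\, X$ this lower bound meets the upper bound in \eqref{eq:seq-ineq}, yielding the final assertion.

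The main obstacle is the orbit-counting step, where one must turn the lower bound on $|F_n|$ into a lower bound on the number $k_n$ of distinct periodic orbits. This is transparent in the pseudo-horseshoe setup of Section~\ref{subsec:pseudohorseshoe}, where all orbits of $F_n$ share a common period, but in full generality requires a small separate argument exploiting the uniform $\varepsilon_n$-separation of distinct orbits together with the compactness of $X$. Once this bookkeeping is done, the proof reduces to three black-box ingredients: the convexity of $\mathcal{M}_f(X)$, the Berger--Bochi lower bound in Remark~\ref{rmk:epsilon-apart-measures}, and the elementary stability of the metric order under the passage to a dense subset.
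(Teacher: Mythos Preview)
Your proposal is correct and follows essentially the same route as the paper: produce pairwise $\varepsilon_n$-apart periodic measures from the orbits in $F_n$, feed them into the Berger--Bochi bound of Remark~\ref{rmk:epsilon-apart-measures} for the convex set $\mathcal{M}_f(X)$, and then transfer the resulting metric-order estimate to $\mathcal{M}_f^{\mathrm{erg}}(X)$ via the density hypothesis, finishing with \eqref{eq:seq-ineq} in the extremal case. The paper is terser (it writes $\#F_n$ directly for the count of apart measures, without isolating the orbit-versus-point bookkeeping you flag) and phrases the intermediate conclusion as a bound on the \emph{lower} metric order, but the skeleton is identical.
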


\smallskip

\begin{proof}
Fix $n \in \mathbb N$ and denote by $\# F_n$ the cardinal of $F_n$. Consider the set of ergodic probability measures supported on $F_n$, whose distinct elements are $\varepsilon_n$-apart due to condition (i). Given $\varepsilon>0$, take $N \in\mathbb N$ such that $\varepsilon_n \leqslant \varepsilon$ for every $n \geqslant N$. Then
$$
\mathcal A(\mathcal M_f(X),\varepsilon) \geqslant  \mathcal A(\mathcal M_f(X), \varepsilon_n) \geqslant \#  F_n \quad \quad \forall\, n \geqslant N
$$
where $\mathcal A(\mathcal M_f(X),\varepsilon)$ is the maximal cardinality of pairwise $\varepsilon$-apart probability measures in $\mathcal M_f(X)$. According to Remark~\ref{rmk:epsilon-apart-measures}, these inequalities and the condition (ii) imply that
$$
\underline{\mathrm{mo}}(\mathcal M_f(X),\mathrm W_p) \,\geqslant \, \gamma.
$$
Moreover, by assumption, the closure of $\mathcal M^{\mathrm{erg}}_f(X)$ is $\mathcal M_f(X)$, so
$$
\underline{\mathrm{mo}}(\mathcal M^{\mathrm{erg}}_f(X),\mathrm W_p) \,= \,\underline{\mathrm{mo}}(\mathcal M_f(X),\mathrm W_p)
	\, \geqslant \,\gamma
$$
as claimed. This proves the first statement of the lemma.

\smallskip

In the particular case of $\gamma=\mathrm{\overline{dim}_B}\, X$, we may conclude more, since, as a consequence of \cite[Equation 2.2, Theorem 1.3]{BB}, we know that
$$\overline{\mathrm{mo}}\,(\mathcal M^{\mathrm{erg}}_f(X), \mathrm W_p) \,\leqslant\, \mathrm{\overline{dim}_B}\, X.$$
Thus, $\mathrm{mo}(\mathcal M^{\mathrm{erg}}_f(X),\mathrm W_p) = \mathrm{\overline{dim}_B}\, X$.
\end{proof}

\section{Proof of Theorem~\ref{thm:main0}}

Assume that $X = [0,1]$. Given $f \in \mathrm{Homeo}_+([0,1])$, it is immediate to conclude that the non-wandering set of $f$, say $\Omega(f)$, coincides with the set of fixed points (we denote by $\mathrm{Fix}(f)$). Indeed, each orbit by $f$ is a monotonic bounded sequence, so it converges and, by the continuity of $f$, the limit is a fixed point. In particular, one has
$$\mathcal M^{\mathrm{erg}}_f([0,1]) \,=\, \Big\{\delta_x \colon f(x) = x\Big\}$$
hence $(\mathcal M^{\mathrm{erg}}_f(X),W_p)$ is isometric to a subset of $(X,d)$. This implies that $\mathcal E_{\mathrm{top}}(f)(\varepsilon) = \mathcal O(\vep^{-1})$ for every sufficiently small $\vep>0$, and so $\mathcal E_{\mathrm{top}}(f)= 0$. In particular, for any $f \in \mathrm{Homeo}_+([0,1])$
$$0 \, = \, \mathcal E_{\mathrm{top}}(f) \, = \, \sup\, \{\mathrm{dim}_B(\mu) \colon \, \mu \in \mathcal M^{\mathrm{erg}}_f([0,1])\}.$$

\smallskip

If $X=\mathbb S^1$, consider $f \in \mathrm{Homeo}_+(\mathbb S^1)$ with irrational rotation number $\rho(f)$. As the rotation number map $\rho: \,\mathrm{Homeo}_+(\mathbb S^1) \to [0,1[$ is surjective and continuous, such homeomorphisms are the $C^0$-generic ones. Such an $f$ is uniquely ergodic (cf. \cite{Wa}), so it has zero topological emergence. Moreover, $f$ is topologically conjugate to the irrational rotation $R_{\rho(f)}$, hence the unique ergodic probability measure which $f$ preserves is equivalent to the Lebesgue measure. Therefore, generically in $\mathrm{Homeo}_+(\mathbb S^1)$ one has
$$0 \, = \, \mathcal E_{\mathrm{top}}(f) \, < \, \sup\, \{\mathrm{dim}_B(\mu) \colon \, \mu \in \mathcal M^{\mathrm{erg}}_f(\mathbb S^1)\}.$$

\smallskip

If, otherwise, $f \in \mathrm{Homeo}_+(\mathbb S^1)$ has rational rotation number $\rho(f)$, then there is a conjugation between the restriction of $f$ to its non-wandering set $\Omega(f)$ and the restriction of the rotation $R_{\rho(f)}$ to a closed subset of $S^1$. Thus, every non-wandering point of $f$ is periodic and all the periodic points have the same period (say $m$).  Moreover, $S^1 \setminus \Omega(f)$ is a union of open intervals and each of these intervals is mapped onto itself by the iterate $f^m$ in a fixed-point free manner. In particular, in each of these intervals one has either $f^m(x) < x$ for every $x$ or $f^m(x) > x$ for every $x$. So, the orbit by $f^m$ of any point of each of these open intervals converges to a periodic point of $f$ with period $m$. The proofs of the previous assertions may be found in \cite{Nitecki}.

\smallskip

By Poincar\'e recurrence theorem, the space $\mathcal{M}^{\mathrm{erg}}_f(S^1)$ is determined precisely by the elements of $\Omega(f)$. Consequently, $\mathcal{M}^{\mathrm{erg}}_f(S^1)$ is made up of Dirac measures supported in periodic orbits with equal period $m$; and the $\mathrm{LP}$-distance between them is the distance in $S^1$ between their supports. Thus, given $\vep > 0$, to cover $\mathcal{M}^{\mathrm{erg}}_f(S^1)$ by balls with radius $\vep$ we need at most $\lfloor 1/\vep \rfloor$ of them. Consequently, $\mathcal{E}_{\mathrm{top}}(f)(\vep) \,= \,\mathcal{O}(\vep^{-1})$ for every small $\vep>0$, and so the topological emergence of $f$ is zero. This ends the proof of the theorem.

\section{Proof of Theorem~\ref{thm:main1}: conservative setting}\label{sec:perturbative-conservative}

Let $X$ be a compact smooth manifold with dimension at least two, $d$ be a metric compatible with the smooth structure of $X$ and $\mu$ be a $\mathbb{O}\mathbb{U}$-probability measure on $X$. Denote by $\mathfrak R_s$ the $C^0$-Baire generic subset of $\mathrm{Homeo}_\mu(X,d)$ formed by homeomorphisms which satisfy the specification property (cf. \cite{GL}).

\smallskip

Recall from Subsection~\ref{subsec:pseudohorseshoe} that, given $\alpha \in \,\,]0,1[$, a strictly decreasing sequence $(\vep_k)_{k \,\in \,\mathbb{N}}$ in the interval $\,]0,1[$ converging to zero, a bi-Lipschitz constant $L>0$ for the charts of a finite atlas of $M$ and $k \in \mathbb{N}$, we denote by ${\mathcal O}_\mu(\vep_k, \alpha)$ the set of homeomorphisms $g \in \mathrm{Homeo}_\mu(X,d)$ such that $g$ has a coherent $(\delta, L\vep_k, q, \alpha)$-pseudo-horseshoe, for some $\delta>0$, $q \in \mathbb{N}$ and $L>0$. For every $K \in \mathbb{N}$, define
$${\mathcal O}_{\mu, \, K}(\alpha) \,=\, \bigcup_{\substack{k \,\, \in \,\, \mathbb{N} \\ k \,\, \geqslant\,\, K}}\,{\mathcal O}_\mu(\vep_k, \alpha).$$
The set ${\mathcal O}_{\mu, \, K}(\alpha)$ is $C^0$-open and dense in $\mathrm{Homeo}_\mu(X,d)$ (cf. \cite{Guih}). Thus the intersection
$$\mathfrak{R}_\mu \,=\, \mathfrak{R}_s \; \cap \;\, \Bigg(\bigcap_{\alpha \,\in \,\,]0,1[\, \cap\, \mathbb Q} \,\, \bigcap_{K \,\in\, \mathbb{N}} \,{\mathcal O}_{\mu, \, K}(\alpha) \Bigg)$$
is $C^0$-Baire generic in $\mathrm{Homeo}_\mu(X,d)$.

\smallskip

Given $\alpha \in \,\,]0,1[\, \cap\, \mathbb Q$ and $K \in\mathbb N$, any homeomorphism $g \in \mathfrak{R}_\mu$ has a coherent $(\delta, L\vep_k, q, \alpha)$-pseudo-horseshoe $\Lambda_k$, for some $\delta>0$, $q \in \mathbb{N}$, $L>0$ and $k \geqslant K$. Therefore (cf. \cite[Proposition 6.1]{CRV}), there exists a finite subset $F_K \subset \mathcal M_g(X)$ formed by probability measures supported on $g$-periodic orbits of period $q$ which are $\varepsilon_k$-apart
from each other, and whose cardinality satisfies
$$\# \,F_K \,\geqslant \,\left\lfloor\Big(\frac1{L\vep_k}\Big)^{\alpha\,\mathrm{\dim} X}\right\rfloor.$$
Therefore, by Lemma~\ref{Bochi_Lemma}, the upper metric order of $\mathcal M_g(X)$ is bigger or equal to $\mathrm{\dim} X$, since
$$\mathrm{mo}\,(\mathcal M_g(X), \,\mathrm W_p) \,\geqslant\, \alpha \, \mathrm{\dim} X$$
$\alpha \in \,\,]0,1[\,\, \cap\, \mathbb Q$ is arbitrary. Moreover, the converse inequality
$$\mathrm{mo}(\mathcal M_g(X), \, \mathrm W_p) \,\leqslant \, \mathrm{\dim} X$$
always holds (see \eqref{eq:seq-ineq}). So, $\mathrm{mo}\,(\mathcal M_g(X), \,\mathrm W_p) = \mathrm{\dim} X$.

\smallskip

We are left to deduce from the previous equality that the topological emergence is maximal. As every $g \in \mathfrak R_\mu$ satisfies the specification property, the closure of the space $\mathcal M^{\mathrm{erg}}_g(X)$ is equal to $\mathcal M_g(X)$ (cf. Subsection~\ref{sec:spec}). Thus,
$$\mathrm{mo}\,(\mathcal M^{\mathrm{erg}}_g(X), \, \mathrm W_p) \,= \,\mathrm{mo}\,(\mathcal M_g(X), \,\mathrm W_p) \,=\, \mathrm{\dim} X$$
and similar equalities hold regarding the metric $\mathrm{LP}$. This confirms that
$$\limsup_{\varepsilon\,\to\,0^+}\,\frac{\log\;\log\mathcal E_{\mathrm{top}}(g)(\varepsilon)}{-\log\varepsilon}\, =\, \mathrm{\dim} X \quad \quad \forall\, g \in \mathfrak{R}_\mu$$
and the proof of Theorem~\ref{thm:main1} for $\mathrm{Homeo}_\mu(X,d)$ is complete.

\smallskip

The previous proof yields the following consequence:

\begin{corollary}\label{cor:0} Under the assumptions of Theorem~\ref{thm:main1}, if $f \in \mathfrak R_\mu$ then $\mathcal E_{\mathrm{top}}(f_{|\mathrm{Per}\,f}) \,=\, \mathrm{\dim} X.$
\end{corollary}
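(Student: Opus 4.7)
The plan is to derive the corollary directly from the ingredients already assembled in the proof of Theorem~\ref{thm:main1} in the conservative setting, so the main task is bookkeeping: identify what $\mathcal M^{\mathrm{erg}}_{f|_{\mathrm{Per}\,f}}$ is, and then transfer the maximality of the metric order from $\mathcal M_f(X)$ to this smaller set.

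First, I would describe $\mathcal M^{\mathrm{erg}}_{f|_{\mathrm{Per}\,f}}$ concretely. If $\nu$ is an $f$-invariant probability measure with $\nu(\mathrm{Per}\,f)=1$, then, since $\mathrm{Per}\,f$ is a countable disjoint union of finite orbits $O$, one has $\nu=\sum_O \nu(O)\,\mu_O$ where $\mu_O$ is the normalized Dirac mass on the orbit $O$. Each $\mu_O$ is ergodic; thus, $\nu$ is a convex combination of periodic measures, and it is ergodic iff $\nu=\mu_O$ for a single orbit. Consequently $\mathcal M^{\mathrm{erg}}_{f|_{\mathrm{Per}\,f}}$ coincides with the set $\mathcal P_f$ of periodic orbit measures of $f$, viewed inside $\mathcal M_1(X)$.

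Next I would recall that any $f\in\mathfrak R_\mu$ satisfies the specification property, which (see Subsection~\ref{sec:spec}) implies that $\mathcal P_f$ is dense in $\mathcal M_f(X)$ with respect to the weak$^*$-topology, hence with respect to $W_p$ and $\mathrm{LP}$. Combined with the proof of Theorem~\ref{thm:main1} in the conservative setting, which yields $\mathrm{mo}(\mathcal M_f(X),W_p)=\mathrm{dim}\,X$, I would then invoke the elementary fact that upper metric order is invariant under taking closures in a compact metric space: if $A$ is dense in $B$, then for any $\varepsilon$-separated set of $B$ we may perturb each point within $\varepsilon/3$ to land in $A$, giving $S_A(\varepsilon/3)\geqslant S_B(\varepsilon)$, while the reverse $S_A(\varepsilon)\leqslant S_B(\varepsilon)$ is trivial; the extra factor of $3$ is absorbed in the $-\log\varepsilon$ denominator as $\varepsilon\to 0^+$. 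Applied to $A=\mathcal P_f=\mathcal M^{\mathrm{erg}}_{f|_{\mathrm{Per}\,f}}$ and $B=\mathcal M_f(X)$, this yields
\begin{equation*}
\overline{\mathrm{mo}}\bigl(\mathcal M^{\mathrm{erg}}_{f|_{\mathrm{Per}\,f}},W_p\bigr)\;=\;\mathrm{mo}\bigl(\mathcal M_f(X),W_p\bigr)\;=\;\mathrm{dim}\,X.
\end{equation*}

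Finally, the general upper bound $\overline{\mathrm{mo}}(\mathcal M_1(X),W_p)\leqslant \mathrm{\overline{dim}_B}\,X=\mathrm{dim}\,X$ from \eqref{eq:seq-ineq} applies to any subset of $\mathcal M_1(X)$, so nothing new is needed for the upper bound on $\mathcal E_{\mathrm{top}}(f|_{\mathrm{Per}\,f})$. Putting these pieces together, $\mathcal E_{\mathrm{top}}(f|_{\mathrm{Per}\,f})=\overline{\mathrm{mo}}(\mathcal M^{\mathrm{erg}}_{f|_{\mathrm{Per}\,f}},W_p)=\mathrm{dim}\,X$ for every $f\in\mathfrak R_\mu$. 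I do not foresee a real obstacle here; the only point that requires a careful word is identifying the ergodic measures of $f|_{\mathrm{Per}\,f}$ with periodic orbit measures (so that density in $\mathcal M_f(X)$ can be invoked), and writing out the closure-invariance of upper metric order cleanly enough to apply to the non-convex set $\mathcal P_f$.
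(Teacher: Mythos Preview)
Your proposal is correct and is exactly what the paper intends when it writes that the corollary is yielded by the previous proof: identify $\mathcal M^{\mathrm{erg}}_{f|_{\mathrm{Per}\,f}}$ with the set of periodic measures, use specification to get density of the latter in $\mathcal M_f(X)$, and transfer the already established equality $\mathrm{mo}(\mathcal M_f(X),W_p)=\dim X$ via closure-invariance of the upper metric order. One harmless slip: $\mathrm{Per}\,f$ need not be a \emph{countable} union of finite orbits, but your decomposition $\nu=\sum_O\nu(O)\,\mu_O$ (only countably many terms are nonzero) and the conclusion that ergodicity forces $\nu=\mu_O$ for a single orbit remain valid regardless.
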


\section{Proof of Theorem~\ref{thm:main1}: non-conservative setting}\label{sec:perturbative-dissipative}

The argument in the previous section also shows that
$$\mathrm{mo}(\mathcal M_f(X), \, \mathrm W_p) \,=\, \mathrm{\dim} X \,=\, \mathrm{mo}(\mathcal M_f(X), \, \mathrm{LP}) \quad \quad \forall \, f \in \mathfrak{R}_0$$
where $\mathfrak{R}_0$ is the $C^0$-generic subset of $\mathrm{Homeo}(X,d)$ defined in Proposition~\ref{C0_generic1}. However, the proof we presented for the conservative case does not entirely apply to $\mathrm{Homeo}(X,d)$. Indeed, whereas a $C^0$-generic homeomorphism in
$\mathrm{Homeo}_\mu(X,d)$ is ergodic \cite{O-U}, hence transitive, there exists a $C^0$-open and dense set of homeomorphisms in $\mathrm{Homeo}(X,d)$ which display absorbing regions (cf. \cite[Lemma 3.1]{PPSS} or \cite{Hurley1}),
hence those maps are non-transitive. As transitivity is a necessary condition for the denseness of the ergodic probability measures in the space of invariant ones (cf. \cite{DGS, KLP}), a typical homeomorphism in $\mathrm{Homeo}(X,d)$ does not satisfy the requirements needed to apply Lemma~\ref{Bochi_Lemma}. Actually, such a strategy cannot even be pursued within a coherent pseudo-horseshoe, since an arbitrarily $C^0$-small perturbation of these structures also allows us to create
open trapping regions. Therefore we need to refine the construction of the set $\mathfrak{R}_0$ in order to ensure the existence of an adequate amount of ergodic probability measures at appropriate scales.

\subsection{Topological horseshoes}

We start by establishing a strengthened version of Proposition~\ref{C0_generic1}.

\begin{proposition}\label{C0_generic1-extra} Fix a strictly decreasing sequence $(\varepsilon_k)_{k \in \mathbb{N}}$ in the interval $]0,1[$ such that $\lim_{k \, \to \, +\infty} \varepsilon_k = 0$. For every $\alpha \in \,\,]0,1[$ and $k \in \mathbb{N}$, there exists a $C^0$-open subset $\widehat{\mathcal O}(\vep_k, \alpha)\subset \mathcal O(\vep_k, \alpha)$ such that:
\begin{enumerate}
\item Given $K \in \mathbb{N}$, the union
$$\widehat{\mathcal O}_K(\alpha)\, = \,\bigcup_{\substack{k \,\, \in \,\, \mathbb{N} \\ k \,\, \geqslant\,\, K}}\, \widehat{\mathcal O}(\vep_k, \alpha)$$
is $C^0$-dense in $\mathrm{Homeo}(X,d)$.
\smallskip

\item There exists a constant $C>0$ such that if $h\in \widehat{\mathcal O}(\vep_k, \alpha)$ then:
\smallskip

\begin{itemize}
\item $h$ has a coherent $(\delta,\,L\vep_k,\,q,\,\alpha)$-{pseudo-horseshoe}, for some $\delta>0$, $q \in \mathbb{N}$ and $L>0$.
\smallskip

\item $h$ has a collection of $\lfloor\Big(\frac1{\vep_k}\Big)^{\alpha\, \mathrm{dim}\, X}\rfloor^q$ periodic orbits of period $q$ whose supports are $\vep^q_k$-apart in the Hausdorff metric.
\smallskip


\item There exists a subset $E_h(X) \subset \mathcal M^{\mathrm{erg}}_h(X)$
whose cardinality is larger than
$$C \,\exp\left(\frac1C \Big\lfloor\Big(\frac1{\vep_k}\Big)^{\alpha\, \mathrm{dim}\, X}\Big\rfloor^q\right)$$
and such that any two distinct elements in $E_h(X)$ are $8^{-\frac1p} \vep_k^{q}$-separated in the $W_p$ distance.
\end{itemize}
\end{enumerate}
\end{proposition}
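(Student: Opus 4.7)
I would take $\widehat{\mathcal O}(\vep_k, \alpha)$ to consist of those $h \in \mathcal O(\vep_k, \alpha)$ whose coherent pseudo-horseshoe satisfies the extra quantitative conditions that (i) the rectangles $\mathcal U_0, \ldots, \mathcal U_{q-1}$ are pairwise $\vep_k^q$-apart, and (ii) the Markov cells $V_I = \bigcap_{r=0}^{q-1} h^{-r}(V_{r, I_r})$ indexed by $I = (I_0, \ldots, I_{q-1}) \in \{1, \ldots, M\}^q$ (where $M = \lfloor (1/\vep_k)^{\alpha \dim X}\rfloor$), together with their $h^r$-images in $\mathcal U_r$, are pairwise $\vep_k^q$-apart. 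Since both conditions are open, $\widehat{\mathcal O}(\vep_k, \alpha)$ is $C^0$-open. For the density of $\widehat{\mathcal O}_K(\alpha)$, I would revisit the pseudo-horseshoe construction of \cite{CRV, Yano} and tune the local model to have vertical stretching rate of order $1/\vep_k$; the $\vep_k$-separation of strips, pulled back through $q-1$ horseshoe iterations, then yields (ii).

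The first of the three bullets is inherited from $\mathcal O(\vep_k, \alpha)$. For the second, a standard nested-intersection fixed-point argument based on Definition~\ref{def:coherent}'s crossing condition produces, for each $I \in \{1, \ldots, M\}^q$, at least one $h$-periodic orbit of period $q$ whose $r$-th iterate lies in $h^r(V_I)$; this gives the $M^q$ orbits, and condition (ii) makes their supports pairwise $\vep_k^q$-apart in Hausdorff distance.

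For the third bullet I would parametrize candidate ergodic measures by subsets $A \subset \{1,\ldots,M\}^q$ of fixed size $K = M^q/2$. Given such $A$, the Bernoulli measure on $(\{1,\ldots,M\}^q)^{\mathbb Z}$ assigning probability $1/K$ to each block in $A$ is $\sigma^q$-ergodic; averaging over $\sigma$-shifts and pushing forward to $X$ via the Markov coding of the horseshoe yields an $h$-ergodic measure $\mu_A$ satisfying $\mu_A(h^r(V_I)) = 1/(qK)$ for $I \in A$ and $\mu_A(h^r(V_I)) = 0$ for $I \notin A$. A Gilbert-Varshamov-style volume estimate then produces $N \geqslant C \exp(M^q/C)$ subsets $A_1, \ldots, A_N$ with $|A_i \triangle A_j| \geqslant K/2$ pairwise, since $\binom{M^q}{K}/\binom{K}{K/4}^2 \geqslant e^{M^q/C}$ by standard binomial bounds.

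To verify the $W_p$-separation, I would apply the Kantorovich-type lower bound $W_p^p(\mu, \nu) \geqslant D^p \cdot \mathrm{TV}(p_\mu, p_\nu)$ to the partition $\{h^r(V_I)\}_{0 \leqslant r \leqslant q-1,\, I \in \{1,\ldots,M\}^q}$ of the horseshoe invariant set, which by (i) and (ii) has $D = \vep_k^q$ and which carries each $\mu_{A_i}$. Since the cell marginals of $\mu_{A_i}, \mu_{A_j}$ differ in $\ell^1$ by $|A_i \triangle A_j|/K \geqslant 1/2$ (hence $\mathrm{TV} \geqslant 1/4$), this gives $W_p(\mu_{A_i}, \mu_{A_j}) \geqslant 4^{-1/p} \vep_k^q \geqslant 8^{-1/p}\vep_k^q$. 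The main obstacle is the quantitative cell-separation condition (ii): upgrading the existing perturbative construction of \cite{CRV, Yano} to force cells pairwise $\vep_k^q$-apart demands careful bi-Lipschitz bookkeeping across $q-1$ horseshoe iterations, but otherwise stays within that framework.
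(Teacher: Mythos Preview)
Your route diverges from the paper's in how the exponentially many ergodic measures are produced and, more importantly, in how their existence is made $C^0$-robust. The paper does \emph{not} rely on a symbolic coding that persists on a $C^0$-open set. Instead it (a) perturbs to a map $g$ that is $C^1$ on a small region and carries a genuine hyperbolic horseshoe $\Lambda$ there, so that $g^q\!\mid_\Lambda$ is conjugate to a full shift on $N=M^q$ symbols and satisfies specification; (b) at this single $g$, for each choice $\underline P$ of $N/2$ of the $N$ period-$q$ orbits, uses specification to manufacture one long periodic orbit $\mathcal P(\underline P)$ that $\vep_k^q/4$-shadows them in succession; (c) invokes the Daalderop--Fokkink technique to make this \emph{fixed finite collection} of periodic orbits permanent on a $C^0$-neighborhood $\widehat{\mathcal W}_g$, and sets $\widehat{\mathcal O}(\vep_k,\alpha)=\bigcup_g\widehat{\mathcal W}_g$. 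The ergodic measures in $E_h(X)$ are then simply the periodic Dirac measures on the orbits $\mathcal P(\underline P_\beta)$, indexed by a maximal $\tfrac{N}{4}$-Hamming-separated family $F'\subset\{\beta:\{1,\dots,N\}\to\{0,1\}:\sum\beta=N/2\}$ of size at least $D_1 e^{C_1N}$; ergodicity is automatic and the $W_p$-separation comes from counting how many of the $q\ell$-blocks of two shadowing orbits sit near different period-$q$ orbits.

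Your plan instead transports Bernoulli measures to $X$ ``via the Markov coding of the horseshoe.'' The gap is that on a $C^0$-open set the pseudo-horseshoe is only a topological horseshoe: the nested cells $\bigcap_{|n|\leqslant m} h^{-n}(V_{\cdot})$ need not shrink to points (the paper even notes that the hyperbolic horseshoe of $g$ ``cannot be expected to persist under small $C^0$-perturbations''), so there is no well-defined coding map $\Sigma\to X$ to push forward along. What does survive robustly is the itinerary map $\pi:\Lambda\to\Sigma$ in the opposite direction; to make your scheme work you would have to argue from the crossing axioms alone that $\pi$ is onto the full shift, and then \emph{lift} each Bernoulli $\nu_A$ to an ergodic $h$-invariant $\mu_A$ with $\pi_*\mu_A=\nu_A$ (say as an extreme point of $\pi_*^{-1}(\nu_A)\cap\mathcal M_h(\Lambda)$). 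Granted that, the cell marginals $\mu_A(h^r(V_I))$ are determined by $\nu_A$ and your $W_p$ bound goes through --- but this is a lifting, not a push-forward, and the surjectivity of $\pi$ and existence of ergodic lifts are exactly the steps your proposal skips. The paper sidesteps the whole issue by dealing only with finitely many periodic orbits, which are finite data that \emph{can} be made $C^0$-permanent.
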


\smallskip

\begin{proof} We start with the construction of the $C^0$-generic set $\mathfrak{R}_0$ described in Proposition~\ref{C0_generic1}. Then, given a homeomorphism $f \in \mathfrak{R}_0$, to overcome the lack of specification within $f$ we will make a small local $C^0$-perturbation of $f$ to obtain a homeomorphism $g$ whose restriction to a fixed arbitrarily small open subset $U$ of $X$ is a $C^1$-diffeomorphism and exhibits in $U$ a horseshoe (that is, a closed invariant set restricted to which the dynamics is conjugate to a full shift on a finite alphabet), where the periodic specification property is valid. Clearly we cannot expect that this horseshoe persists under small $C^0$-perturbations; but a well chosen finite number of its periodic points and the periodic orbits that shadow them may be turned permanent by a $C^0$-small perturbation.

\smallskip

The first main difficulty of this argument is to adjust the size of the needed $C^0$ perturbations with the separation rates of the strips in the horseshoe, in order to be able to apply the combinatorial approach of \cite[Theorem 1.6]{BB}.
The second difficulty is to ensure that the ergodic probability measures supported on all of these orbits are distinct and sufficiently separated in the $W_p$ metric.

\smallskip

Let us briefly recall the reasoning to prove \cite[Proposition~7.1]{CRV} (we refer the reader to this reference for more details). Given $\delta >0$, a homeomorphism of $X$ can be arbitrarily $C^0$-approximated by another homeomorphism, say $f$, which has both a $q$-absorbing disk $B$ with diameter smaller than $\delta$, for some $q \in \mathbb{N}$, and a $C^0$-open neighborhood $\cW_f$ in $\mathrm{Homeo}(X,d)$ such that, for every $g \in \cW_f$, the disk $B$ is still $q$-absorbing for $g$. Then, by extra $q$ arbitrarily small $C^0$-perturbations, we get a homeomorphism $g \in \cW_f$ exhibiting a coherent $(\delta,\,\vep_k,\,q,\,\alpha)$-{pseudo-horseshoe} in the finite union $\hat B$ of the domains $\big(f^j(B)\big)_{0\,\leqslant\, j\, <\, q}$.

\smallskip

The previous construction is performed by an isotopy in $\hat B$, so we may assume that the homeomorphism $g$ is $C^1$-smooth on the open domain $\hat B$ and that there exists a subset $Q \subset \hat B$ such that the maximal invariant set $\Lambda = \bigcap_{n\,\in\, \mathbb Z} g^n(Q)$ is a horseshoe, $g^q$ has a horseshoe with
$$N = \Big\lfloor\Big(\frac1{\vep_k}\Big)^{\alpha\, \mathrm{dim}\, X}\Big\rfloor^q$$
strips and the Hausdorff distance between any two such strips is bounded below by $\vep^q_k$.
Let $T = T(\vep_k^q/4) \in \mathbb{N}$ given by the specification (cf. Subsection~\ref{sec:spec}) for the map $g$ restricted to $\Lambda$ and take an even positive integer $\ell$ (depending on $g$ and $\vep_k$) satisfying
\begin{equation}\label{eq:elle}
\quad q \ell \geqslant T
\end{equation}
and so that the diameter of each connected component of $\bigcap_{|n|\,\leqslant\, \ell/2} g^n(Q)$ is strictly smaller than $\vep_k^q$.

\smallskip

Using the methods of \cite{DF}, one can perform a finite number 
of arbitrarily small $C^0$-perturbations so that there is a $C^0$-open neighborhood of $g$ where a fixed finite number of periodic orbits become permanent, that is, persist under small $C^0$-perturbations of the dynamics. Therefore:

\begin{lemma}\label{le:specification}
There exists a $C^0$-open neighborhood $\widehat \cW_g \subset \cW_g$ of $g$ in $\mathrm{Homeo}(X,d)$ such that every $h \in \widehat \cW_g$  satisfies the following conditions:
\begin{itemize}
\item[(a)] $h$ has a coherent $(\delta,\,L\vep_k,\,q,\,\alpha)$-{pseudo-horseshoe}, for some $\delta>0$, $q \in \mathbb{N}$ and $L>0$.
\smallskip

\item[(b)]  $h$ has a collection $\mathcal F_h$ of $N = \lfloor\Big(\frac1{\vep_k}\Big)^{\alpha\, \mathrm{dim}\, X}\rfloor^q$ permanent periodic orbits of period $q$, and the Hausdorff distance between these orbits is bounded below by $\vep_k^q$.
\medskip

\item[(c)] $\bigcap_{n\,\in\, \mathbb Z} h^n(Q)$ is a pseudo-horseshoe and the diameter of each connected component of $\bigcap_{|n|\,\leqslant \,\ell/2} h^n(Q)$ is strictly smaller than $\vep_k^q$.
\medskip

\item[(d)] Assume that $N$ is an even integer (otherwise, replace $N$ by $2\lfloor N/2 \rfloor$). For any collection
$$\underline P \,=\, (P_1, \,P_2, \,\cdots, \,P_{\frac{N}{2}})$$
of $N/2$ periodic orbits in ${\mathcal F}_h$ there is a periodic orbit $\mathcal P = \mathcal P(\underline P)$ with period $q\ell N/2 + T N/2$ which $\vep_k^q/4$-shadows the pseudo-orbit
$$\Big(\underbrace{(P_1,\, P_1,\, \cdots,\, P_1)}_{\ell},\, \underbrace{(P_2,\, P_2,\, \dots, \,P_2)}_{\ell},\, \cdots, \,\underbrace{(P_{\frac{N}{2}},\, P_{\frac{N}{2}},\, \cdots,\, P_{\frac{N}{2}})}_{\ell}\Big)$$
with a time lag of $T$ iterates in between.
\end{itemize}
\end{lemma}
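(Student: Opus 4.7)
The strategy is to first produce, for the smooth model $g$, all of the periodic data called for in (a)--(d) by combining the specification property of $g|_\Lambda$ with the pseudo-horseshoe structure on $\hat B$, and then to invoke the $C^0$-perturbation machinery of \cite{DF} to simultaneously turn this (finite) collection of orbits into \emph{permanent} periodic orbits on a $C^0$-open neighborhood $\widehat{\mathcal W}_g \subset \mathcal W_g$.

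\textbf{Step 1: Produce the relevant orbits for $g$.}
Since $g$ is $C^1$-smooth on $\hat B$ and $\Lambda = \bigcap_{n \in \mathbb Z} g^n(Q)$ is a genuine horseshoe with $N$ strips that are pairwise $\vep_k^q$-apart in Hausdorff distance, each strip contains a $g$-periodic orbit of period $q$; call this collection $\mathcal F_g$. Then, using the specification time $T = T(\vep_k^q/4)$ for $g|_\Lambda$ and the integer $\ell$ chosen to satisfy \eqref{eq:elle} and the connected-component diameter bound, for every ordered subcollection $\underline P = (P_1,\dots,P_{N/2})$ of $N/2$ distinct orbits of $\mathcal F_g$ the specification property yields a periodic orbit $\mathcal P(\underline P) \subset \Lambda$ of period $q\ell N/2 + T N/2$ that $(\vep_k^q/4)$-shadows the requested pseudo-orbit. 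The total number of orbits to be handled,
\[
\#\,\mathcal F_g \,+\, \binom{N}{N/2},
\]
is finite (depending only on $g$, $\vep_k$, $\alpha$ and $q$).

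\textbf{Step 2: Make the finite orbit collection permanent.}
Apply the technique of Daalderop--Fokkink \cite{DF} (used earlier in the proof of Proposition~\ref{C0_generic1}) to this fixed finite set of periodic orbits of $g$: by arbitrarily $C^0$-small perturbations supported away from a small neighborhood of each orbit, one obtains a homeomorphism (still denoted $g$ after relabeling) such that a $C^0$-open neighborhood $\widehat{\mathcal W}_g \subset \mathcal W_g$ exists in which every one of these finitely many periodic orbits persists with the same period. Since the pseudo-horseshoe of $g$ is coherent, it is already $C^0$-robust by Proposition~\ref{C0_generic1}, so shrinking $\widehat{\mathcal W}_g$ if necessary we may assume (a) holds, together with the $\vep_k^q$-apart property in Hausdorff distance for $\mathcal F_h$ and the diameter bound on the connected components of $\bigcap_{|n|\le \ell/2} h^n(Q)$ in (c). The shadowing statement (d) is preserved for $h \in \widehat{\mathcal W}_g$ by further shrinking the neighborhood so that the permanent orbit continuations $\mathcal P(\underline P)_h$ lie $\vep_k^q/4$-close to the original $\mathcal P(\underline P)$ on their entire (common) period.

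\textbf{Step 3: Extract the pseudo-horseshoe nature of the global intersection.}
For (c), the fact that $\bigcap_{n \in \mathbb Z} h^n(Q)$ is a pseudo-horseshoe follows from the same Conley--Moser type topological crossing argument that certifies the coherent pseudo-horseshoe of $h$ in Definition~\ref{def:po}: crossings of horizontal by vertical strips are preserved by $C^0$-small perturbations, and so are nested intersections of such strips. Combined with the diameter control inherited from $g$ (which is an open condition for $C^0$-small perturbations because $\bigcap_{|n|\le \ell/2} h^n(Q)$ is a finite intersection of continuous images of compact sets), this yields (c).

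\textbf{Main obstacle.} The delicate point is the interplay between the three $C^0$-smallness requirements that must all be met inside the \emph{same} neighborhood $\widehat{\mathcal W}_g$: first, the perturbation size must be small enough that the $\binom{N}{N/2}$ shadowing orbits, together with the $N$ base orbits, remain permanent and their continuations still $(\vep_k^q/4)$-shadow the prescribed pseudo-orbits; second, the perturbation must be small enough to preserve the coherent pseudo-horseshoe and the $\vep_k^q$-separation in Hausdorff distance; third, it must preserve the strict inequality on the diameter of the connected components of the finite intersection $\bigcap_{|n|\le \ell/2} h^n(Q)$. Since all three are open conditions and the number of orbits to be made permanent is finite, intersecting the corresponding finitely many $C^0$-open neighborhoods gives the desired $\widehat{\mathcal W}_g$.
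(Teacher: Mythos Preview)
Your proposal is correct and follows the same route as the paper: the paper's own argument for this lemma is the single sentence preceding it, namely that the methods of \cite{DF} allow one to perform finitely many arbitrarily small $C^0$-perturbations so that a fixed finite collection of periodic orbits becomes permanent on a $C^0$-open neighborhood of $g$; your Steps~1--3 simply unpack this. Two minor slips that do not affect the argument: in Step~2 the Daalderop--Fokkink perturbations are supported \emph{near} the periodic orbits (to make them locally permanent), not away from them; and your count $\binom{N}{N/2}$ does not match ``ordered subcollection of $N/2$ distinct orbits'' (nor the literal statement of (d), which allows arbitrary tuples and hence up to $N^{N/2}$ choices), but since any of these numbers is finite the conclusion is unchanged.
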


\smallskip

We define the subset $\widehat{\mathcal O}_K(\vep_k,\alpha)$ as the union of the previously obtained open domains $\widehat \cW_g$. By construction, this is a $C^0$-open subset of $\mathrm{Homeo}(X,d)$ and, given $K \in \mathbb{N}$, the union $\widehat{\mathcal O}_K(\alpha)\, = \,\bigcup_{\substack{k \,\, \in \,\, \mathbb{N} \\ k \,\, \geqslant\,\, K}}\, \widehat{\mathcal O}(\vep_k, \alpha)$ is $C^0$-dense in $\mathrm{Homeo}(X,d)$. We are left to prove that, if $h \in \widehat{\mathcal O}(\vep_k, \alpha)$, then there exists a subset $E_h(X) \subset \mathcal M^{\mathrm{erg}}_h(X)$ with the properties listed in Proposition~\ref{C0_generic1-extra}.
For that we will apply the combinatorial estimates used in the proof of \cite[Theorem~1.6]{BB}.

\smallskip

According to \cite{BB}, every maximal $\frac{N}4$-separated set in the space
$$F=\Big\{\beta \colon \,\{1,2, \dots, N\} \,\to \,\{0,1\} \text{ such that } \sum_{i=1}^N \beta(i) = \frac{N}2\Big\}$$
endowed with the Hamming metric, has cardinality bounded below by $D_1 e^{C_1 N}$, for some uniform constants $D_1, C_1 > 0 $. Given $h \in \widehat{\mathcal O}(\vep_k, \alpha)$, fix a $N/4$-maximal separated set $F'\subset F$ and consider the space $E_h(X)$ of ergodic probability measures defined by
$$\mu_{\beta} \,=\, \frac1{q\ell N/2 + T N/2}\,\sum_{j=0}^{q\ell N/2 + T N/2 - 1} \,\beta(i_j)\,\delta_{h^j(\mathcal P(\underline P_{\beta}))}$$
where $\beta \in F'$, $\underline P_{\beta}\,=\,(P_{i_1},\, P_{i_2}, \dots, P_{i_\frac{N}{2}})$ and $\beta(i_j) = 1$ for every $1 \leqslant j \leqslant N/2$. Note that the cardinality of $E_h(X)$ coincides with the one of $F'$.

\smallskip

We claim that any two probability measures in $E_h(X)$ are $8^{-\frac1p} \vep_k^{q}$-separated in the metric $W_p$. Firstly, observe that, if $\beta_1,\beta_2 \in F'$ are distinct, then
$$\underline P_{\beta_1} \,=\, (P_{i_1},\,P_{i_2}, \cdots, P_{i_j},\cdots, P_{i_\frac{N}{2}})\,\, \neq \,\, \underline P_{\beta_2}  \,=\, (P_{k_1},\,P_{k_2}, \cdots, P_{k_j}, \cdots P_{k_\frac{N}{2}})$$
and these two vectors differ in at least $N/4$ entries (that is, there are at least $N/4$ values of $1 \leqslant j \leqslant N/2$ such that $P_{i_j} \neq P_{k_j}$). Moreover, using item (d) of Lemma~\ref{le:specification}, we conclude that, for any such values of $j$, one has
$$d\Big(h^{t + (j-1) q \ell}(\mathcal P(\underline P_{\beta_1})),\,\, h^{t +(j-1) q \ell}(\mathcal P(\underline P_{\beta_2}))\Big) \,>\, \vep_k^q/2 \quad \quad \forall \, 0\leqslant t \leqslant q \ell.$$
Due to the choice of $\ell$ (see \eqref{eq:elle}), given $\pi\,\in\,\Pi(\mu_{\beta_1}, \mu_{\beta_2})$ one has
\begin{align*}
\int_{X\times X}\,[d(x,y)]^p\;d\pi(x,y) & = \int_{\supp (\mu_{\beta_1})\times \supp (\mu_{\beta_2})}\,[d(x,y)]^p\;d\pi(x,y) \\
& \smallskip \\
& \geqslant \frac{\vep_k^{pq}}2\; \pi\Big( \big\{(x,y)\in X\times X \colon d(x,y) \,>\, \frac{\vep_k^q}2\big\} \Big) \\
& \smallskip \\
& \geqslant \frac{\vep_k^{pq}}2\;  \frac{q\ell N/4}{q\ell N/2 + T N/2}\\
& \smallskip \\
& = \frac{\vep_k^{pq}}4\;  \frac{q\ell}{q\ell + T}\\
& \smallskip \\
& \geqslant \frac{\vep_k^{pq}}8.
\end{align*}
Thus
\begin{align*}
W_p(\mu_{\beta_1}, \mu_{\beta_2})\, =\, \inf_{\pi\,\in\,\Pi(\mu_{\beta_1}, \mu_{\beta_2})}\,\left(\int_{X\times X}\,[d(x,y)]^p\;d\pi(x,y)\right)^{1/p} \,\geqslant\, 8^{-\frac1p} \vep_k^{q}.
\end{align*}
This ends the proof of Proposition~\ref{C0_generic1-extra}.
\end{proof}

\smallskip

\subsection{Estimate of the topological emergence} It is immediate to deduce from Proposition~\ref{C0_generic1-extra} that the set
\begin{equation}\label{eq:set-R}
\mathfrak{R} \, = \, \bigcap_{\alpha\, \in \,\,]0,1[ \,\cap\, \mathbb Q} \; \bigcap_{K \,\in \,\mathbb{N}} \; \bigcup_{\substack{k \,\, \in \,\, \mathbb{N} \\ k \,\, \geqslant\,\, K}}\, \widehat{\mathcal O}(\vep_k, \alpha)
\end{equation}
is $C^0$-Baire residual in $\mathrm{Homeo}(X,d)$. We will show that
$$\overline{\mathrm{mo}}\,(\mathcal{M}^{\mathrm{erg}}_f(X),\,\mathrm W_p) \,=\, \mathrm{dim} X \quad \quad \forall\, f \in \mathfrak{R}.$$

\smallskip

The estimates in the previous subsection indicate that, given $K \in \mathbb{N}$, an integer $k \geqslant K$ and a rational number $\alpha\in \,\,]0,1[$, there are $q_k\ \in \mathbb{N}$ and a uniform constant $C'>0$ such that
$$\frac{\log\log S_{\mathcal{M}^{\mathrm{erg}}_f(X)}(8^{-\frac1p}\vep_{k}^{q_k})}{-\log \,(8^{-\frac1p}\vep_{k}^{q_k})} \,\geqslant\,
\frac{\log (C + \frac1C \lfloor\Big(\frac1{\vep_k}\Big)^{\alpha\, \mathrm{dim}\, X}\rfloor^{q_k})}{- {q_k}\log \vep_{k} -\log (8^{-\frac1p})}
\,\geqslant\, \frac{\log (C') - q_k\,\log \,(\vep_k)\, \alpha\, \mathrm{dim}\, X }{-{q_k}\log \vep_{k} - \log (8^{-\frac1p})}.$$
Consequently, taking the $\limsup$ as $k$ goes to $+ \infty$, we get
$$\overline{\mathrm{mo}}\,(\mathcal{M}^{\mathrm{erg}}_f(X),\,\mathrm W_p)\,\geqslant\, \alpha\, \mathrm{dim}\, X.$$
As $\alpha$ may be chosen arbitrarily close to $1$, we conclude that $\overline{\mathrm{mo}}\,(\mathcal{M}^{\mathrm{erg}}_f(X),\,\mathrm W_p) = \mathrm{dim}\, X,$ as claimed. The proof of Theorem~\ref{thm:main1} is complete.

\smallskip

\subsection{Pseudo-physical measures}

Assume that the manifold $X$ is endowed with a volume reference measure, which we call Lebesgue measure. Given $\mu \in \mathcal M_f(X)$, denote by $\mathcal{L}_\omega(x,f)$ the set of accumulation points in the weak$^*$-topology of the sequence $\big(\mathrm{e}^f_n(x)\big)_{n \,\in\, \mathbb N}$ of $n^{\text{th}}$-empirical measures associated to $x$ by $f$. The measure $\mu$ is called \emph{physical} if the set of those $x \in X$ for which $\mathcal{L}_\omega(x,f) = \{\mu\}$ has positive Lebesgue measure in $X$. Recall from \cite{CE} that $\mu \in \mathcal M_f(X)$ is said to be \emph{pseudo-physical} if, for every $\vep > 0$, the set
$$A_\vep(\mu) \,=\, \big\{x \in X \colon \,\mathrm{dist}(\mu, \nu) \,<\, \vep \quad \forall\, \nu \in \,\mathcal{L}_\omega(x, f)\big\}$$
has positive Lebesgue measure, where $\mathrm{dist}$ stands for any distance inducing in $\mathcal M(X)$ the weak$^*$-topology.

\smallskip

\begin{remark}
It was proved in \cite[Proposition 1.2]{KNS22} that, if $X$ is an infinite compact metric space and $f \colon X \to X$ is a continuous map with the specification property, then there is a residual subset $Y$ of $X$ such that
$$\mathrm{\overline{dim}_B}\, \big(\mathcal{L}_\omega(x,f)\big) = +\infty \quad \quad \forall \, x \in Y.$$
When $\mu$ is a $\mathbb{O}\mathbb{U}$-probability measure, as the specification property is $C^0$-generic in $\mathrm{Homeo}_\mu(X,d)$ so is the previous equality.
\end{remark}

\smallskip

Let $\mathcal O_f(X)$ be the set of pseudo-physical measures of $f$ and $\mathcal M^{\mathrm{per}}_f(X)$ be the set of periodic Dirac measures of $f$. It is known (cf. \cite[Theorem 1]{CT}) that, for a $C^0$-generic $f$ in $\mathrm{Homeo}(X,d)$, one has
$$\overline{\mathcal M^{\mathrm{erg}}_f}(X) \,=\, \overline{\mathcal M^{\mathrm{per}}_f}(X) \,=\, \mathcal O_f(X)$$
where the closures are taken in the weak$^*$-topology. Moreover, for a $C^0$-generic $f$ in $\mathrm{Homeo}(X,d)$, the set $\mathcal O_f(X)$ has empty interior in $\mathcal M_f(X)$, so $\mathcal M_f(X) \setminus \mathcal O_f(X)$ is an open dense subset of $\mathcal M_f(X)$ which does not intersect $\overline{\mathcal M^{\mathrm{erg}}_f}(X)$ (cf. \cite[Theorem 2]{CT}). Therefore, in spite of $\mathcal O_f(X)$ being meager,
$$\overline{\mathrm{mo}}\,(\mathcal M^{\mathrm{erg}}_f(X), \, \mathrm W_p) \,= \,\overline{\mathrm{mo}}\,(\mathcal O_f(X), \, \mathrm W_p)$$
and similarly regarding the metric $\mathrm{LP}$. Hence, from Theorem~\ref{thm:main1} we conclude that:

\begin{corollary} For a $C^0$-generic $f$ in $\mathrm{Homeo}(X,d)$ one has
$$\overline{\mathrm{mo}}\,(\mathcal O_f(X), \, \mathrm W_p) \,=\, \mathrm{\dim} X.$$
\end{corollary}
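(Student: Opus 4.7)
The plan is to derive this corollary as an essentially immediate consequence of the $\mathrm{Homeo}(X,d)$ half of Theorem~\ref{thm:main1} together with the Catsigeras--Troubetzkoy genericity result \cite{CT} quoted in the paragraph preceding the statement. The elementary observation underpinning the argument is that the upper metric order $\overline{\mathrm{mo}}$ depends only on the closure of a set: for any $A$ contained in a compact metric space one has $\overline{\mathrm{mo}}(A) = \overline{\mathrm{mo}}(\overline A)$, because any maximal $\varepsilon$-separated subset of $\overline A$ can be approximated by points of $A$ with perturbation $\eta \ll \varepsilon$, producing an $(\varepsilon-\eta)$-separated subset of $A$ of the same cardinality; the resulting correction disappears in the defining $\limsup$ of $\log\log S(\varepsilon)/(-\log\varepsilon)$.

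Concretely, I would form the intersection $\mathfrak R \cap \mathfrak R_{\mathrm{CT}}$, where $\mathfrak R \subset \mathrm{Homeo}(X,d)$ is the $C^0$-Baire residual set from \eqref{eq:set-R} and $\mathfrak R_{\mathrm{CT}}$ is the $C^0$-residual subset produced by \cite[Theorem~1]{CT} on which the equality $\overline{\mathcal M_f^{\mathrm{erg}}(X)} = \mathcal O_f(X)$ holds, with closure taken in the weak$^*$-topology. Being the intersection of two residual subsets of the Baire space $\mathrm{Homeo}(X,d)$, this set is itself $C^0$-Baire residual, and it is on this set that the conclusion will be verified.

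For any $f$ in the intersection, closure invariance together with the two cited ingredients yields
\begin{align*}
\overline{\mathrm{mo}}(\mathcal O_f(X),\mathrm W_p)
&= \overline{\mathrm{mo}}(\overline{\mathcal M_f^{\mathrm{erg}}(X)},\mathrm W_p) \\
&= \overline{\mathrm{mo}}(\mathcal M_f^{\mathrm{erg}}(X),\mathrm W_p) \\
&= \mathrm{dim}\, X,
\end{align*}
where the first equality invokes \cite[Theorem~1]{CT}, the second is the closure invariance of the upper metric order, and the third is the $\mathrm{Homeo}(X,d)$ conclusion of Theorem~\ref{thm:main1}. The identical computation with $\mathrm{LP}$ in place of $\mathrm W_p$ delivers the L\'evy--Prokhorov analogue. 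There is no substantive obstacle here: the only step not handed to us by the two cited results is the closure invariance of $\overline{\mathrm{mo}}$, which follows routinely from its definition in terms of $\varepsilon$-separated sets, so the proof reduces to an application of Baire category to combine the two pre-existing generic sets.
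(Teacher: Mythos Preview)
Your proposal is correct and follows essentially the same route as the paper: the paragraph preceding the corollary already records that, for a $C^0$-generic $f$, one has $\overline{\mathcal M^{\mathrm{erg}}_f(X)}=\mathcal O_f(X)$ by \cite[Theorem~1]{CT}, whence $\overline{\mathrm{mo}}\,(\mathcal M^{\mathrm{erg}}_f(X),\mathrm W_p)=\overline{\mathrm{mo}}\,(\mathcal O_f(X),\mathrm W_p)$, and then invokes Theorem~\ref{thm:main1}. Your version simply makes explicit the closure invariance of $\overline{\mathrm{mo}}$ and the Baire-category intersection of the two residual sets, both of which the paper leaves implicit.
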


\section{Proof of Theorem~\ref{thm2}}

The content of this section is inspired by the intermediate value property of the upper box dimension of bounded subsets of the Euclidean space $\mathbb R^\ell$, $\ell \in \mathbb{N}$, proved in \cite[Theorem~2]{Feng-Zhiying}.
Let $(Z,d)$ be a compact metric space and fix an arbitrary $0 \leqslant \beta \leqslant \mathrm{\overline{mo}}\,(Z)$. If $\beta \in \{0, \,\mathrm{\overline{mo}}\,(Z)\}$, it is immediate to find a subset $Y_\beta\subset Z$ such that $\mathrm{\overline{mo}}\,(Y_\beta)=\beta$.

\smallskip

Now we will consider $\beta \in \,\,]0, \,\mathrm{\overline{mo}}\,(Z)[$.
We start by showing that in order to evaluate the upper metric order of $Y$
$$ \overline{\mathrm{mo}}\,(Y) \, = \,  \limsup_{\varepsilon\,\to\, 0^+}\,\frac{\log\log S_Y(\varepsilon)}{-\log \varepsilon}$$
we may use balls of radius $\lambda^j$, for $j \in \mathbb{N}$, and any choice of $0 < \lambda < 1$. More precisely:

\begin{lemma}\label{le:sequencia} Given $\lambda \in \,\,]0,1[$, for every subset $Y$ of $Z$ one has
$$\mathrm{\overline{mo}}\,(Y) \,= \,\limsup_{j\,\to\,+\infty} \,\frac{\log\log S_Y(\lambda^{j})}{-\log \lambda^{j}}.$$
\end{lemma}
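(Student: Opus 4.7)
My plan is to prove the two inequalities separately, using only the monotonicity of $\varepsilon \mapsto S_Y(\varepsilon)$ and the fact that consecutive terms $\lambda^j, \lambda^{j+1}$ differ multiplicatively by the fixed constant $\lambda$.

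\textbf{Step 1 (easy direction).} Since $(\lambda^{j})_{j\in\mathbb{N}}$ is a sequence of positive reals tending to $0^+$, the limit superior of the function $\varepsilon \mapsto \tfrac{\log\log S_Y(\varepsilon)}{-\log \varepsilon}$ along this particular sequence cannot exceed its $\limsup$ over all $\varepsilon \to 0^+$. This gives
\[
\limsup_{j\to +\infty}\,\frac{\log\log S_Y(\lambda^{j})}{-\log \lambda^{j}} \,\leqslant\, \overline{\mathrm{mo}}\,(Y).
\]

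\textbf{Step 2 (harder direction).} For the opposite inequality, I will dispense immediately with the trivial case where $Y$ is finite (in which case both quantities vanish by the convention $\log\log 1 = 0$), and assume that $S_Y(\varepsilon) \to +\infty$ as $\varepsilon \to 0^+$. Fix any sufficiently small $\varepsilon > 0$ and let $j = j(\varepsilon) \in \mathbb{N}$ be the unique integer with $\lambda^{j+1} < \varepsilon \leqslant \lambda^{j}$. The map $\varepsilon \mapsto S_Y(\varepsilon)$ is non-increasing, so
\[
S_Y(\varepsilon) \,\leqslant\, S_Y(\lambda^{j+1}),
\]
which combined with $-\log\varepsilon \geqslant -\log \lambda^{j} = j\,(-\log\lambda)$ yields
\[
\frac{\log\log S_Y(\varepsilon)}{-\log\varepsilon} \,\leqslant\, \frac{\log\log S_Y(\lambda^{j+1})}{-\log \lambda^{j+1}} \,\cdot\, \frac{j+1}{j}.
\]
As $\varepsilon \to 0^+$ we have $j(\varepsilon) \to +\infty$, and $\tfrac{j+1}{j}\to 1$, so passing to the $\limsup$ on the left (and using that the right-hand factor is bounded) gives
\[
\overline{\mathrm{mo}}\,(Y) \,\leqslant\, \limsup_{j\to +\infty}\,\frac{\log\log S_Y(\lambda^{j})}{-\log \lambda^{j}}.
\]

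\textbf{Main obstacle.} There is no serious obstacle here; the only subtlety is the standard care required when working with the double logarithm. Concretely, for the estimate in Step 2 to make formal sense, one must know that $\log\log S_Y(\lambda^{j+1})$ is eventually positive (so that the inequality direction is preserved after multiplying by the positive factor $\tfrac{j+1}{j}$), and this is automatic once $S_Y(\lambda^{j+1}) > e$, which holds for all $j$ large when $Y$ is infinite. The finite case is handled by inspection using the convention $\log\log 1 = 0$ stated in the paper.
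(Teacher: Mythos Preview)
Your proof is correct and follows the same strategy as the paper: sandwich $\varepsilon$ between consecutive powers $\lambda^{j+1} < \varepsilon \leqslant \lambda^{j}$, use the monotonicity of $S_Y$, and pass to the $\limsup$. In fact your Step~2 is slightly more careful than the paper's version: the paper asserts the clean double inequality
\[
\frac{\log\log S_Y(\lambda^{j+1})}{-\log \lambda^{j+1}} \;\geqslant\; \frac{\log\log S_Y(\varepsilon)}{-\log \varepsilon} \;\geqslant\; \frac{\log\log S_Y(\lambda^{j})}{-\log \lambda^{j}}
\]
directly, but since numerator and denominator both increase (or both decrease) when passing from $\varepsilon$ to $\lambda^{j+1}$ (resp.\ $\lambda^j$), this sandwich is not literally valid pointwise; what is valid is precisely your inequality with the correction factor $\tfrac{j+1}{j}$, which is harmless in the $\limsup$. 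Your handling of the sign of $\log\log S_Y$ and of the finite case is also appropriate.
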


\begin{proof} Given $\lambda \in \,\,]0,1[$ and $\vep > 0$, there is a positive integer $j$ such that $\lambda^{j+1} < \vep \leqslant \lambda^{j}$. Then
$$\frac{1}{-\log \lambda - \log(\lambda^j)} \,<\,\frac{1}{-\log \vep} \,\leqslant \, \frac{1}{\log \lambda - \log(\lambda^{j+1})}$$
and
$S_Y(\lambda^{j}) \,\leqslant \, S_Y(\vep) \, \leqslant \, S_Y(\lambda^{j+1})$
which imply that
$$
\frac{\log\log S_Y(\lambda^{j+1})}{-\log \lambda^{j+1}} \geqslant \frac{\log\log S_Y(\varepsilon)}{-\log \varepsilon}
	\geqslant \frac{\log\log S_Y(\lambda^{j})}{-\log \lambda^{j}}.
$$
Consequently,
\begin{eqnarray*}
 \limsup_{\varepsilon\,\to\, 0^+}\,\frac{\log\log S_Y(\varepsilon)}{-\log \varepsilon}
 	= \limsup_{j\,\to\,+\infty}\, \frac{\log\log S_Y(\lambda^{j})}{-\log \lambda^{j}}.
\end{eqnarray*}
\end{proof}

\smallskip

Let us resume the proof of the theorem when $0 < \beta < \overline{\mathrm{mo}}\,(Z)$. We start by choosing $\lambda \in \,\,]0,\frac12[$. By compactness of $Z$, for each $k \in \mathbb{N}$ there is a finite open covering $\mathcal U_k$ of $Z$ by balls of radius $\lambda^k$ whose corresponding balls of radius $\lambda^{k+1}$ are pairwise disjoint. In particular, there exists a partition $\mathcal P_k$ of $Z$ made up of elements whose diameter is bounded by $\lambda^k$ and whose inner diameter is bounded below by $\lambda^{k+1}$.

\smallskip

As $\mathrm{\overline{mo}}\,(Z) > \beta$, there are infinitely many positive integers $k$ such that $S_Z(\lambda^{k-1})$ is bigger than
$\lfloor \exp(\lambda^{-\beta {k}}) \rfloor.$ Let $k_1 \in \mathbb{N}$ be the smallest of them, which satisfies
$$S_Z(\lambda^{k_1-1}) >  \lfloor \exp(\lambda^{-\beta {k_1}}) \rfloor.$$
As the diameter of the elements of the partition $\mathcal P_{k_1}$ is smaller than $\lambda^{k_1}$ and $0<\lambda<\frac12$,
we are sure that any two $\lambda^{k_1-1}$-separated points belong to different elements of the partition $\mathcal P_{k_1}$.
Therefore, there exist at least $\lfloor \exp(\lambda^{-\beta {k_1}}) \rfloor$ elements of the partition $\cP_{k_1}$ which intersect $Z$.
Moreover, since the upper metric order is finitely stable (cf. \cite{BB}), that is,
$$\mathrm{\overline{mo}}\,(\bigcup_{1\,\leqslant\, j\, \leqslant\, n} B_j) \,= \,\max_{1\,\leqslant\, j\, \leqslant\, n} \overline{\mathrm{mo}}\,(B_j)$$
for any collection $\{B_j\}_{1\,\leqslant\, j \,\leqslant\, n}$ of subsets of $Z$, there exists a partition element $E_{k_1} \in \mathcal P_{k_1}$ such that
\begin{equation}\label{mo1}
\overline{\mathrm{mo}}\,(Z\cap E_{k_1})\,=\,\overline{\mathrm{mo}}\,(Z).
\end{equation}
Select a finite sample of points
$$\widehat Y_{k_1} \,=\,\Big\{x_{1,i} \colon \,1\leqslant i \leqslant \lfloor \exp(\lambda^{-\beta k_1}) \rfloor\Big\}\subset Z$$
which belong to different elements of the partition $\mathcal P_{k_1} \setminus \{E_{k_1}\}$. Afterwards, take the set
$$Y_{1} \,=\, \widehat Y_{k_1} \; \cup \; (Z \cap E_{k_1}).$$
By construction, the equality~\eqref{mo1} and the finite stability of the upper metric order, one has
\begin{enumerate}
\item  $\quad \mathrm{\overline{mo}}\,(Y_{1})\,=\,\overline{\mathrm{mo}}\,(Z)$;
\item  $\quad \#\{E\in \cP_{k_1}\colon \, E\cap Y_{1}\}\,=\,\lfloor \exp(\lambda^{-\beta k_1}) \rfloor+1$;
\item  $\quad \#\{E\in \cP_{k}\colon \,E\cap Y_1\}\,\leqslant \,\lfloor \exp(\lambda^{-\beta k}) \rfloor$ for every $1 \leqslant k < k_1$.
\end{enumerate}
\smallskip

\noindent By item (1), one can take the smallest integer $k_2 > k_1$ such that
$$S_{Y_1}(\lambda^{k_2-1}) >  \lfloor \exp(\lambda^{-\beta {k_2}} \rfloor$$
and so there are at least $\lfloor \exp(\lambda^{-\beta {k_2}}) \rfloor$ elements of the partition $\cP_{k_2}$ which intersect $Y_1$.
Thus, there exists $E_{k_2}\in \cP_{k_2}$ such that
$$\overline{\mathrm{mo}}\,(Z\cap E_{k_1})\,=\,\overline{\mathrm{mo}}\,(Z).$$
Again, take a finite collection of points
$$\widehat Y_{k_2}\,=\, \Big\{x_{2,i} \colon\, 1\leqslant i \leqslant \lfloor \exp(\lambda^{-\beta k_2}) \rfloor\Big\} \subset Y_1$$
belonging to different elements of the partition $\mathcal P_{k_2} \setminus \{E_{k_2}\}$. Then consider the set
$$Y_{2} \,=\, \widehat Y_{1} \; \cup \; (Z \cap E_{k_2})$$
which satisfies
\begin{enumerate}
\item[(4)] $\quad \mathrm{\overline{mo}}\,(Y_{2})\,=\,\overline{\mathrm{mo}}\,(Z)$;
\item[(5)] $\quad \#\{E\in \cP_{k_2}\colon \,E\cap Y_{2}\}\, =\, \lfloor \exp(\lambda^{-\beta k_2}) \rfloor+1$;
\item[(6)] $\quad \#\{E\in \cP_{k}\colon \,E\cap Y_{2}\}\,\leqslant\, \lfloor \exp(\lambda^{-\beta k}) \rfloor$ for every $k_1 < k < k_2$.
\end{enumerate}
\smallskip

\noindent Proceeding recursively, one constructs a nested sequence of sets
$Y_n \subset \dots \subset Y_2 \subset Y_1 \subset Z$
whose upper metric orders coincide with $\overline{\mathrm{mo}}\,(Z)$, and such that
\begin{equation}\label{eq:dim1}
\#\{E\in \cP_{k_n}\colon \, E\cap Y_{n}\}\,=\,\lfloor \exp(\lambda^{-\beta k_n}) \rfloor + 1
\end{equation}
and
\begin{equation}\label{eq:dim2}
\#\{E\in \cP_{k}\colon \, E\cap Y_{n}\}\,\leqslant\, \lfloor \exp(\lambda^{-\beta k}) \rfloor \quad \text{ for every $k_{n-1} < k < k_n$}.
\end{equation}
\smallskip

\noindent In particular, bringing together equations ~\eqref{eq:dim1} and ~\eqref{eq:dim2}, and the fact that the inner diameter of $\mathcal P_k$
is bounded below by $\lambda^{k+1}$, we conclude that the subset of $Z$ defined by
$$Y_\beta \,=\, \bigcap_{n\,\in \, \mathbb{N}}\, Y_n$$
has upper metric order $\mathrm{\overline{mo}}\,(Y_{\beta}) = \beta$. The proof of the theorem is complete.

\section{Proof of Theorem~\ref{cor:1}}

Let $(X,d)$ be a compact metric space. Consider a continuous map $f: X\to X$ and take $\beta \in [0, \, \mathcal{E}_{\mathrm{top}}(f)]$. The following argument is inspired by the proof of \cite[Theorem E]{BB}, where the case $\beta = \mathcal{E}_{\mathrm{top}}(f)$ was addressed.

\smallskip

Assume that $\mathcal{E}_{\mathrm{top}}(f) > 0$ and fix $\beta \in [0, \, \mathcal{E}_{\mathrm{top}}(f)[$. By Theorem~\ref{thm2} applied to $Z = \mathcal{M}^{\mathrm{erg}}_f(X)$, whose upper metric order $\mathrm{\overline{mo}}\,(Z)$ is precisely $\mathcal{E}_{\mathrm{top}}(f)$, there exists a subset $Y_\beta \subset Z$ such that $\mathrm{\overline{mo}}\,(Y_\beta) = \beta$. Therefore, by \cite[Theorem 3.9]{BB} we may find a probability measure $\nu \in {\mathcal M}_1(\mathcal{M}^{\mathrm{erg}}_f(X))$ such that $\overline{q_0}(\nu) = \mathrm{\overline{mo}}\,(Y_\beta)$, where $\overline{q_0}$ stands for the quantization of $\nu$. (We refer the reader to \cite{GraLus} for more details regarding this notion, which aims at approximating
$\nu$, in the Wasserstein or $\mathrm{LP}$ metric, by measures with finite support.) Then the probability measure $\mu = \int_{\mathcal{M}_1(X)}\,\eta \, d\nu(\eta)$ is $f$-invariant, so we may apply \cite[Proposition 3.12]{BB} to $\mu$ and thus conclude that
$$\mathcal{E}_\mu(f) \,=\, \limsup_{\vep \, \to \, 0^+} \, \frac{\log \log \mathcal{E}_\mu(f)(\vep)}{-\log \vep} \,=\, \overline{q_0}(\nu) \,=\, \mathrm{\overline{mo}}\,(Y_\beta) \,=\, \beta.$$
This ends the proof of the theorem.



\bigskip

\subsection*{Acknowledgments}
The authors are grateful to J. Fraser and N. Jurga for bringing to their attention useful references on Dimension Theory. MC and PV were partially supported by CMUP, which is financed by national funds through FCT - Funda\c c\~ao para a Ci\^encia e a Tecnologia, I.P., under the project with reference UIDB/00144/2020, and also acknowledge financial support from the project PTDC/MAT-PUR/4048/2021. PV benefits from the grant CEECIND/03721/2017 of the Stimulus of Scientific Employment, Individual Support 2017 Call, awarded by FCT. This work was initiated during the visit of FR to CMUP, whose hospitality is gratefully acknowledged.

\end{document}